\documentclass[12pt]{amsart}

\usepackage{framed}
\usepackage{braket}
\usepackage{amsmath,amssymb,amsthm}
\usepackage{color}
\usepackage{bm}
\usepackage[all]{xy}
\usepackage{amscd}
\usepackage{graphicx}
\usepackage{ascmac}
\usepackage{BOONDOX-frak, mathrsfs}
\usepackage[top=30truemm,bottom=30truemm,left=25truemm,right=25truemm]{geometry}
\usepackage{mathtools}
\mathtoolsset{showonlyrefs=true}
\usepackage{tikz}
\usetikzlibrary{cd}

\makeatletter
\@addtoreset{equation}{section}

\makeatother

\newcommand{\Z}{\mathbb{Z}}

\newcommand{\F}{\mathbb{F}}

\newcommand{\bE}{\mathbb{E}}

\newcommand{\bN}{\mathbb{N}}

\newcommand{\cI}{\mathcal{I}}

\newcommand{\cL}{\mathcal{L}}

\newcommand{\ol}[1]{\overline{#1}}

\newcommand{\lrang}[1]{\langle #1 \rangle}

\DeclareMathOperator{\ord}{ord}

\usepackage[OT2,T1]{fontenc}
\DeclareSymbolFont{cyrletters}{OT2}{wncyr}{m}{n}
\DeclareMathSymbol{\Sha}{\mathalpha}{cyrletters}{"58}

\let\oldenumerate\enumerate
\renewcommand{\enumerate}{
   \oldenumerate
   \setlength{\itemsep}{1pt}
   \setlength{\parskip}{0pt}
   \setlength{\parsep}{0pt}
}
\let\olditemize\itemize
\renewcommand{\itemize}{
   \olditemize
   \setlength{\itemsep}{1pt}
   \setlength{\parskip}{0pt}
   \setlength{\parsep}{0pt}
}

\theoremstyle{plain}
\newtheorem{thm}{Theorem}[section]
\newtheorem{lem}[thm]{Lemma}

\newtheorem{prop}[thm]{Proposition}

\theoremstyle{definition}
\newtheorem{defn}[thm]{Definition}

\newtheorem{rem}[thm]{Remark}
\newtheorem{eg}[thm]{Example}

\usepackage{diagbox}
\usepackage{tikz}
\usetikzlibrary{shapes.geometric}
\usepackage{mathtools}
\usepackage{float}
\usepackage{booktabs}

\newcommand{\swid}[1]{\hspace{0.05em} #1 \hspace{0.05em}}

\DeclareMathOperator{\aug}{aug}
\DeclareMathOperator{\Pic}{Pic}

\makeatletter
\@namedef{subjclassname@2020}{\textup{2020} Mathematics Subject Classification}
\makeatother

\title
[Graph coverings with prescribed Iwasawa invariants]
{Construction of graph coverings with prescribed Iwasawa invariants}
\author[T.~Kataoka]{Takenori Kataoka}
\address{Department of Mathematics, Faculty of Science Division II, Tokyo University of Science.
1-3 Kagurazaka, Shinjuku-ku, Tokyo 162-8601, Japan}
\email{tkataoka@rs.tus.ac.jp}
\keywords{Iwasawa theory, Iwasawa invariants, Graph theory}
\subjclass[2020]{05C25 (Primary), 11R23}
\date{\today}
\begin{document}

\maketitle

%%%%%%%%%%%%%%%%%%%%%
\begin{abstract}
For a $\Z_p$-covering of connected graphs, an analogue of Iwasawa's class number formula describes the growth of the number of spanning trees in terms of Iwasawa $\lambda$- and $\mu$-invariants.
In this paper, we show that any pair $(\lambda, \mu)$ can be realized as the Iwasawa invariants of an unramified $\Z_p$-covering of a bouquet, provided that the necessary condition that $\lambda$ is odd is satisfied.
We further show that any pair $(\lambda, \mu)$, without a parity condition, can be realized if we allow ramified $\Z_p$-coverings.
\end{abstract}
%%%%%%%%%%%%%%%%%%%%%

%%%%%%%%%%%%%%%%%%%%%%
\section{Introduction}\label{sec:intro}
%%%%%%%%%%%%%%%%%%%%%%

We begin with a quick review of Iwasawa theory of graphs (see Section \ref{sec:prelim} for details).
Let $p$ be a fixed prime number.
We study a (possibly ramified) $\Z_p$-covering of connected graphs, that is, a tower of graph coverings
\[
X = X_0 \leftarrow X_1 \leftarrow X_2 \leftarrow \cdots
\]
such that each $X_n$ is a connected graph and $X_n \to X$ is a Galois covering with Galois group isomorphic to $\Z/p^n\Z$.
We symbolically write $X_{\infty}/X$ for this $\Z_p$-covering.

For each $n \geq 0$, let $\kappa_{X_n}$ denote the number of spanning trees of $X_n$.
Then the analogue of Iwasawa's class number formula shows that there are non-negative integers
\[
\lambda = \lambda(X_{\infty}/X),
\quad
\mu = \mu(X_{\infty}/X),
\]
and an integer $\nu$ such that
\[
\ord_p(\kappa_{X_n}) = \lambda n + \mu p^n + \nu
\]
holds for $n \gg 0$.
Here, $\ord_p$ denotes the $p$-adic valuation normalized by $\ord_p(p) = 1$.
This formula is proved independently by Gonet \cite[Theorem 1.1]{Gon22} and McGown--Valli\`{e}res \cite[Theorem 6.1]{MV24} if $X_{\infty}/X$ is unramified, and by Gambheera--Valli\`{e}res \cite[Theorem A]{GV24} in general.

The main theme of this paper is to determine the possible pairs
\[
(\lambda(X_{\infty}/X), \mu(X_{\infty}/X))
\]
as $X_{\infty}/X$ varies.

For a while, we focus on the case where $X_{\infty}/X$ is unramified.
Then this question is closely related to the one studied by Dion--Lei--Ray--Valli\`{e}res \cite{DLRV24}.
They observed that $\lambda(X_{\infty}/X)$ is odd as long as $p \geq 3$, and constructed many examples.
In this paper, we first show that $\lambda(X_{\infty}/X)$ must be odd, including the case $p = 2$ (see Section \ref{sec:nec}).
Then the main theorem reveals that this is the only restriction.

\begin{thm}\label{thm:main}
For any odd $\lambda \geq 1$ and any $\mu \geq 0$, there exists an unramified $\Z_p$-covering $X_{\infty}/X$ of connected graphs satisfying $\lambda(X_{\infty}/X) = \lambda$ and $\mu(X_{\infty}/X) = \mu$.
Indeed, we construct such a covering with $X$ a bouquet (i.e., a graph with a single vertex).
\end{thm}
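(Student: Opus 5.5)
The plan is to take $X$ to be a bouquet with many loops and to specify the $\Z_p$-covering by a voltage assignment, i.e.\ a multiset of elements $a \in \Z_p$ (equivalently integers), one per loop. Let $c_a \geq 0$ be the number of loops carrying voltage $a$. The $n$-th layer $X_n$ is connected exactly when some $a$ with $c_a>0$ is a $p$-adic unit. I will always include loops with $a=1$, so connectivity is automatic.

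First I express $\kappa_{X_n}$ through the Laplacian. The Galois group is cyclic and $X$ has one vertex, so the Laplacian of $X_n$ diagonalizes over the characters of $\Z/p^n\Z$. The matrix-tree theorem then gives
\[
\kappa_{X_n} = \frac{1}{p^n}\prod_{\zeta^{p^n}=1,\ \zeta\neq 1} F(\zeta-1), \qquad F(T) = \sum_a c_a\,\varphi_a(T),
\]
where $\varphi_a(T) = 2-(1+T)^a-(1+T)^{-a}$. Next set $S = 2-(1+T)-(1+T)^{-1} = -T^2/(1+T)$. Then $\varphi_a$ is a polynomial $P_a(S)$ of Chebyshev type, with no constant term. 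For $a\geq 1$ it has degree exactly $a$ in $S$ and leading coefficient $\pm 1$. Since $S = T^2\cdot(\text{unit})$, we can write $F(T) = T^2 G(T)$ with $G \in \Z_p[[T]]$. Substituting into the product formula and using $\ord_p(\zeta-1) = 1/\varphi(p^k)$ gives the following rule, which is the standard reading of the class number formula. If $G$ has $\mu$-invariant $\mu_G$ and $\lambda$-invariant $\lambda_G$, then
\[
\lambda(X_\infty/X) = \lambda_G + 1, \qquad \mu(X_\infty/X) = \mu_G.
\]
Here the extra $+1$ comes from the factor $T^2$ counted over the $p^n-1$ nontrivial roots, minus the $1/p^n$. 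I would check this bookkeeping carefully against the general formula.

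Now fix an odd $\lambda = 2s+1$ and $\mu\geq 0$. The goal is $\mu_G = \mu$ and $\lambda_G = 2s$. Reducing modulo $p$, it suffices to arrange
\[
\sum_a c_a\varphi_a \equiv p^{\mu}\cdot u\, S^{s+1}
\]
with $u$ a unit, in the following sense: $F = p^\mu F'$ where $F' \bmod p$ equals a nonzero constant times $S^{s+1}$. In that case $G = p^\mu\, T^{-2}F'$, and $T^{-2}F' \bmod p$ has $T$-order exactly $2s$, because $S^{s+1}/T^2 = \pm T^{2s}(1+T)^{-s-1}$. To achieve this, note that the family $\varphi_1,\dots,\varphi_{s+1}$, viewed as polynomials in $S$ over $\F_p$, is triangular with leading coefficients $\pm1$. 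It therefore spans all polynomials of degree $\le s+1$ in $S$ without constant term. Hence there are $b_1,\dots,b_{s+1}\in\{0,\dots,p-1\}$ with $\sum b_a\varphi_a \equiv S^{s+1} \pmod p$.

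Take multiplicities $c_a = p^\mu b_a$ for $a\ge 2$, and $c_1 = p^\mu(b_1 + p)$. The added $p$ copies of the unit voltage do not change anything mod $p$ after dividing by $p^\mu$, and they guarantee connectivity even when $b_1 = 0$. Then $F = p^\mu F'$ with $F' \equiv S^{s+1} \pmod p$, which gives $\mu_G=\mu$ and $\lambda_G=2s$, hence $(\lambda,\mu)$ as required.

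The main obstacle I expect is the precise translation between $\ord_p(\kappa_{X_n})$ and the invariants of $G$, and in particular the exact constant shift $\lambda = \lambda_G+1$. This needs the evaluation of $\prod_{\zeta \neq 1}(\zeta-1)^2$ together with the $1/p^n$ factor, and care when $p=2$. The linear-algebra step over $\F_p$ in the $S$-variable is routine once the Chebyshev structure of $\varphi_a$ is verified.
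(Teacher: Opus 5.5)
Your proposal is correct and follows essentially the same route as the paper. Your $S=-T^2/(1+T)$ is $-(T+\iota(T))$, and your $\varphi_a$ are, up to sign and $S\mapsto -S$, the Chebyshev-type polynomials $\Phi((1+T)^a-1)$ whose triangularity drives the paper's construction; your multiplicities $p^{\mu}b_a$ plus $p^{\mu+1}$ extra loops of voltage $\gamma$ mirror the paper's reduction modulo $p^{\mu+1}$ and its added term $p^{\mu+1}(1-\gamma)$. The bookkeeping you flag is exactly Theorem \ref{thm:ICNF_ref}, which gives $\lambda=\lambda(Z_\alpha)-1$; since $\lambda(F)=\lambda_G+2$, your shift $\lambda=\lambda_G+1$ agrees with it, including for $p=2$.
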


This theorem will be proved in Section \ref{sec:bouquets}.

In Section \ref{sec:remark}, we present two related remarks, still focusing on bouquets.
In the first, we discuss the probability of given invariants in a certain sense.
Indeed, we observe that $\mu(X_{\infty}/X) = 0$ holds in almost all cases, and that the probability of $\lambda(X_{\infty}/X) = 2\lambda' - 1$ is $\cfrac{1}{p^{\lambda'-1}}\left(1 - \cfrac{1}{p} \right)$.
This seems to be a natural estimate.
In the second, we note that one can furthermore achieve prescribed $\mu_l$-invariants for all prime numbers $l \neq p$.

Finally, in Section \ref{sec:ram}, we consider ramified $\Z_p$-coverings $X_{\infty}/X$.
We observe that the parity condition depends on the ramification; for example, when $X$ has two vertices, one unramified and the other totally ramified in $X_{\infty}/X$, then $\lambda(X_{\infty}/X)$ must be even.
The second main theorem reveals that this is the only restriction.

\begin{thm}\label{thm:main_ram}
For any even $\lambda \geq 0$ and any $\mu \geq 0$, there exists a (ramified) $\Z_p$-covering $X_{\infty}/X$ of connected graphs satisfying $\lambda(X_{\infty}/X) = \lambda$ and $\mu(X_{\infty}/X) = \mu$.
Indeed, we construct such a covering with $X$ a graph with two vertices, one unramified and the other totally ramified in $X_{\infty}/X$.
\end{thm}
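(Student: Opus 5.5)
Following Gambheera--Vallières, I would describe the invariants of a (possibly ramified) $\Z_p$-covering through a power series. Take a base graph $X$ and a voltage assignment $\alpha$ on its directed edges with values in $\Z_p$. Form the twisted Laplacian (Ihara/Kirchhoff type) matrix $\Delta(T)$ over $\Lambda=\Z_p[[T]]$, in which a directed edge of voltage $a$ contributes $(1+T)^a$. The invariants $\lambda,\mu$ are then those of a characteristic element. In the unramified case this element is $\det\Delta(T)$ divided by the trivial-character factor $T$. In the ramified case it is the determinant of the Laplacian restricted to the unramified part, with the ramified vertices contributing ordinary rows/columns (their fibres are single vertices).

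For Theorem \ref{thm:main_ram}, take $X$ with two vertices $v$ (unramified) and $w$ (totally ramified). At $v$ put $t$ loops with voltages $a_1,\dots,a_t\in\Z_p$. Join $v$ to $w$ by $m\ge 1$ parallel edges, and let $w$ carry no loops; total ramification at $w$ means the $p^n$ lifts of $v$ all attach to one vertex over $w$. The characteristic power series should then be, up to units,
\[
f(T)=m+\sum_{i=1}^{t}\bigl(2-(1+T)^{a_i}-(1+T)^{-a_i}\bigr).
\]
The reason is that deleting the row and column for $w$ leaves a $1\times 1$ matrix: the degree minus the loop contributions, with the edges to $w$ contributing $m$ to the degree. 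The key observation is that each summand $2-(1+T)^a-(1+T)^{-a}=-(1+T)^{-a}\bigl((1+T)^a-1\bigr)^2$ is, up to a unit, $-a^2T^2$ plus higher terms. Moreover, $f$ is invariant under $1+T\mapsto (1+T)^{-1}$, which explains why $\lambda$ is even.

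The main step is to choose $m,t,a_i$ so that $\mu(f)=\mu$ and $\lambda(f)=\lambda=2k$.

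\textbf{Case $k=0$.} Take $m=p^\mu$ and $a_i=p^{\mu}$ (or no loops at all). Then $f\equiv p^\mu\cdot(\text{unit})$, giving $\lambda=0$.

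\textbf{Case $k\ge1$.} Write $f=m+\sum_i g(a_iT')$ schematically, where $g(x)$ is even. I would use $p$-adic valuations of the $a_i$ to control the Weierstrass degree. Set $m=p^\mu c$ with $p\nmid c$, and give all loops voltages divisible by $p^\mu$, so that $p^\mu$ divides the whole series and $f=p^\mu h$. For $h$ we need $\lambda(h)=2k$, $\mu(h)=0$: the coefficients of $T^{2j}$ for $j<k$ must be divisible by $p$, and the coefficient of $T^{2k}$ must be a unit. Expanding $(1+T)^{a}+(1+T)^{-a}-2=\sum_{j\ge1}c_j(a)T^j$, where the $c_j(a)$ are polynomials in $a$ with the $T^{2j}$-leading part $a^{2j}/(2j)!$ times lower-order corrections, the requirement becomes a system of congruences on power sums $\sum_i a_i^{2j}$ modulo $p^{\mu+1}$. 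This can be solved by taking many loops, using that the multiplicity $t$ is free, for example $t\equiv$ suitable values mod $p$.

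I expect this congruence design to be the main obstacle, especially when $p=2$ or when $p\le 2k$, where the factorials create denominators. If the direct approach stalls, the fallback is an inductive construction: show that adding a well-chosen loop, or a cluster of $p$ loops, raises $\lambda$ by exactly $2$ without changing $\mu$. The same mechanism is presumably what proves Theorem \ref{thm:main} in Section \ref{sec:bouquets}, with $m=0$ and a division by $T$ accounting for the shift to odd $\lambda$. So I would first reuse the polynomial constructed there, say $F_{\lambda',\mu}(T)$ realizing $T\cdot F$ for the bouquet, and then add the constant term $m$ coming from the edges to $w$.
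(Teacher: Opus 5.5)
Your setup is correct: $f$ is exactly the paper's $Z_{\alpha,\cI}=h_\alpha+\iota(h_\alpha)+m$. The identification $\lambda(X_\infty/X)=\lambda(f)$ holds here because the correction term $-1+[\Gamma:I_w]$ of Theorem~\ref{thm:ICNF_ref_ram} vanishes. For $\lambda=0$, your choice of no loops with $m=p^\mu$ is exactly what the paper does. The case $\lambda=2k\ge 2$, however, fails as written, for three reasons.

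First, voltages divisible by $p^\mu$ do not make the loop terms divisible by $p^\mu$ in $\Z_p[[T]]$. Since $(1+T)^{p^\mu}\equiv 1+T^{p^\mu}\pmod p$, for $a=p^\mu$ one has $2-(1+T)^{a}-(1+T)^{-a}=-(1+T)^{-a}\bigl((1+T)^a-1\bigr)^2\equiv -T^{2p^\mu}(1+T^{p^\mu})^{-1}\not\equiv 0\pmod p$. The $\mu$-invariant is governed by the \emph{multiplicities} of the loops, not by their voltages; this also invalidates your alternative ``$a_i=p^\mu$'' in the case $k=0$.

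Second, every loop term vanishes at $T=0$, so the constant term of $f$ is $m$. With $m=p^\mu c$ and $p\nmid c$, either $\mu(f)<\mu$, or $\mu(f)=\mu$ and $p^{-\mu}f$ has a unit constant term, forcing $\lambda(f)=0$. For $k\ge 1$ you need $p^{\mu+1}\mid m$; your own requirement that the coefficient of $T^{0}$ be divisible by $p$ already contradicts your choice.

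Third, and most seriously, you never show that loop data with $\lambda=2k$, $\mu=0$ exist. You reduce this to congruences on the power sums $\sum_i a_i^{2j}$ and leave them unsolved, correctly foreseeing that the $(2j)!$ denominators (for $p\le 2k$ or $p=2$) obstruct exactly this. The fallback claim that adding a loop ``raises $\lambda$ by exactly $2$'' is also unjustified.

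The missing idea is the change of variable $S=T+\iota(T)=T^2-T^3+\cdots$. Every $\iota$-invariant series is uniquely of the form $g(T+\iota(T))$, with $\lambda(f)=2\lambda_S(g)$ and $\mu(f)=\mu_S(g)$. The map $\Phi$ sending $h$ to the $g$ with $g(T+\iota(T))=h+\iota(h)$ restricts to a $\Z$-linear isomorphism $T\Z[T]\to S\Z[S]$ (Proposition~\ref{prop:Phi_surj}), since $\Phi(T^k)=S(\Phi(T^{k-1})+\Phi(T^{k-2}))$ is monic of degree $k$. The paper therefore prescribes $\Phi(h)=p^\mu S^k$ and $m=0$, and pulls back to an integral $h=\sum_j c_j(1-\gamma^j)$. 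It then changes the $c_j$ (the loop multiplicities) and $m$ modulo $p^{\mu+1}$ to make them nonnegative, respectively positive. This leaves $\lambda_S(\Phi(h)+m)=k$ and $\mu_S(\Phi(h)+m)=\mu$ unchanged, with no binomial coefficients and no restriction on $p$.

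Alternatively, your last sentence does give a proof once $m$ is chosen correctly. Take the loops of the bouquet from Theorem~\ref{thm:main} realizing $(2k-1,\mu)$, for which $h+\iota(h)=p^\mu u$ with $\lambda(u)=2k$ and $\mu(u)=0$, and put $m=p^{\mu+1}$. Then $f=p^\mu(u+p)$ with $u+p\equiv u\pmod p$, so $\lambda(f)=2k$ and $\mu(f)=\mu$. This is the paper's own worked example.
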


Combining Theorems \ref{thm:main} and \ref{thm:main_ram}, we realize all $(\lambda, \mu)$ without any parity condition.

\begin{rem}
Iwasawa theory of graphs is developing as an analogue of the classical Iwasawa theory (Iwasawa \cite{Iwa59}) concerning the ideal class groups of number fields.
Naturally, the theme of the present paper has a counterpart in the classical setting, namely, to determine the possible pairs
\[
(\lambda(K_{\infty}/K), \mu(K_{\infty}/K))
\]
as the $\Z_p$-extension $K_{\infty}/K$ of number fields varies.
Toward this question, important progress was made by Ozaki \cite{Oza04} and Fujii--Ohgi--Ozaki \cite{FOO06}.
Ozaki \cite[Theorem B]{Oza04} constructed examples with prescribed $\mu$-invariants with $\lambda = 0$ when $p \geq 3$, while Fujii--Ohgi--Ozaki \cite[Theorem 1]{FOO06} constructed examples with prescribed $\lambda$-invariants with $\mu = 0$ when $p = 2, 3, 5$.
However, in general, such constructions remain an open problem.
\end{rem}

%%%%%%%%%%%%%%%%%%%%%%
\section{Preliminaries}\label{sec:prelim}
%%%%%%%%%%%%%%%%%%%%%%

%%%%%%%%%%%%%%%%%%%%%%
\subsection{Iwasawa invariants}\label{ss:Iw_inv}
%%%%%%%%%%%%%%%%%%%%%%

We first recall the definition of the Iwasawa invariants for elements of the formal power series ring $\Z_p[[T]]$.
Let $f \in \Z_p[[T]] \setminus \{0\}$.
The $\mu$-invariant $\mu(f)$ is the largest integer such that
\[
f \in p^{\mu(f)} \Z_p[[T]]
\]
holds.
The $\lambda$-invariant $\lambda(f)$ is then the largest integer such that
\[
\ol{p^{-\mu(f)} f} \in T^{\lambda(f)} \F_p[[T]]
\]
holds, where the overline denotes the reduction modulo $p$.
Note that $\F_p[[T]]$ is a discrete valuation ring and $T$ is a uniformizer.

Let $\Gamma$ be a profinite group (written multiplicatively) that is isomorphic to $\Z_p$.
For $n \geq 0$, we set $\Gamma_n = \Gamma/\Gamma^{p^n} \simeq \Z/p^n\Z$.
The Iwasawa algebra is defined as the projective limit
\[
\Z_p[[\Gamma]] = \varprojlim_n \Z_p[\Gamma_n],
\]
which is also called the completed group ring.
Let $\gamma$ be a fixed topological generator of $\Gamma$.
Then we have a topological $\Z_p$-algebra isomorphism
\[
\Z_p[[\Gamma]] \simeq \Z_p[[T]],
\quad
\gamma \leftrightarrow 1 + T,
\]
which is often called Serre's isomorphism.
Using this isomorphism, for each $f \in \Z_p[[\Gamma]] \setminus \{0\}$, we define $\lambda(f)$ and $\mu(f)$ as the invariants of the associated power series.
These invariants are independent of the choice of the topological generator $\gamma$.

In this paper we also deal with elements of the power series $\Z_p[T]$ and the group ring $\Z_p[\Gamma]$.
It is convenient to describe their relations in a commutative diagram
\[
\xymatrix{
\Z_p[T] \ar@{^(->}[r] \ar@{^(->}[d] & \Z_p[\Gamma] \ar@{^(->}[d] \\
\Z_p[[T]] \ar[r]_{\simeq} & \Z_p[[\Gamma]]
}
\]
obtained by identifying $T$ with $\gamma - 1$.
More generally, for any commutative ring $A$ (e.g., $A = \Z$), we have 
\[
\xymatrix{
A[T] \ar@{^(->}[r] \ar@{^(->}[d] & A[\Gamma] \ar@{^(->}[d] \\
A[[T]] & A[[\Gamma]]
}
\]
again by $T = \gamma - 1$, where we set $A[[\Gamma]] = \varprojlim_n A[\Gamma_n]$ (the injectivity of the map $A[\Gamma] \to A[[\Gamma]]$ can be checked in the same way as claim (3) of the next lemma applied to $a=0$).
The following elementary lemma is useful to study the $\mu$-invariants.

\begin{lem}\label{lem:divis1}
Let $A$ be a commutative ring and let $a \in A$ be an element.
\begin{itemize}
\item[(1)]
For $f \in A[T]$, we have $f \in a A[T]$ if and only if $f \in a A[[T]]$.
\item[(2)]
For $f \in A[T]$, we have $f \in a A[T]$ if and only if $f \in a A[\Gamma]$.
\item[(3)]
For $f \in A[\Gamma]$, we have $f \in a A[\Gamma]$ if and only if $f \in a A[[\Gamma]]$.
\end{itemize}
\end{lem}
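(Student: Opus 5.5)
In each of the three claims the direction ``$\Rightarrow$'' is immediate, because the relevant maps are ring homomorphisms and so send $aA[T]$ into $aA[[T]]$, $aA[\Gamma]$, or $aA[[\Gamma]]$. The work is in the converse directions, and the common idea is to compare coefficients in a suitable basis.

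\emph{Claim (1).} Suppose $f = ag$ with $g \in A[[T]]$. Write $f = \sum_{i \le d} c_i T^i$ and $g = \sum_i b_i T^i$. Comparing coefficients gives $c_i = a b_i$ for every $i$, so $c_i \in aA$ for all $i$. Hence $f = a\sum_{i\le d} b_i T^i \in aA[T]$.

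\emph{Claim (2).} We do not need injectivity of $A[T]\to A[\Gamma]$ beyond what is obvious; what we need is a coefficient comparison in the group ring. Suppose $f = ag$ with $g \in A[\Gamma]$. Since $\gamma$ generates a dense subgroup, $A[\Gamma]$ here means the group ring of the cyclic subgroup $\langle\gamma\rangle \simeq \Z$, so $g$ is a finite sum $g = \sum_{k} b_k \gamma^k$ with $k \in \Z$. Write $f = \sum_i c_i(\gamma-1)^i$ and expand it in powers of $\gamma$ using the binomial theorem: $f = \sum_{k\ge 0} e_k \gamma^k$ with $e_k = \sum_i c_i\binom{i}{k}(-1)^{i-k}$. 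Because the $\gamma^k$ are a basis of the group ring, $e_k = ab_k \in aA$ for all $k$. The passage from the $c_i$ to the $e_k$ is given by a triangular matrix with unit diagonal entries $\pm1$. Inverting it, each $c_i$ is a $\Z$-linear combination of the $e_k$; explicitly, $T^i$ is recovered by expanding in terms of $\gamma = 1+T$. Hence every $c_i \in aA$, and $f \in aA[T]$.

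\emph{Claim (3).} This is the step I expect to need the most care, since one has to control the projective limit. Suppose $f = \sum_{k=m}^{M} b_k\gamma^k$ lies in $aA[[\Gamma]]$, say $f = ag$. Choose $n$ with $p^n > M - m$. Project to $A[\Gamma_n]$, which is free with basis the images of $\gamma^k$ for $m \le k < m+p^n$. These images are pairwise distinct, so the image of $f$ has coefficients exactly $b_k$ in this basis. The image of $ag$ is $a$ times the image of $g$, so each $b_k \in aA$. Therefore $f \in aA[\Gamma]$.

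The case $a=0$ of (3) gives the injectivity of $A[\Gamma]\to A[[\Gamma]]$ claimed in the text. The main point to verify is the choice of $n$, which ensures that no two exponents of $f$ collide modulo $p^n$.
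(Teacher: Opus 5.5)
Your strategy matches the paper's in all three parts. In (1) you compare coefficients. In (2) your triangular change of basis between the $(\gamma-1)^i$ and the $\gamma^k$ is the explicit form of the paper's remark that $A[T]\simeq A[\gamma^{\bN}]$ is an $A$-direct summand of $A[\Gamma]$. In (3) you project to a finite level $A[\Gamma_n]$.

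There is, however, a real error in how you read $A[\Gamma]$. It is the group ring of the whole profinite group $\Gamma\simeq\Z_p$, not of the cyclic subgroup $\langle\gamma\rangle\simeq\Z$. The density of $\langle\gamma\rangle$ plays no role in an abstract group ring: its elements are finite sums $\sum_\sigma c_\sigma\sigma$ over arbitrary $\sigma\in\Gamma$, for instance $\gamma^{a}$ with $a\in\Z_p\setminus\Z$. This generality is needed, because the lemma is applied to $Z_\alpha$ for voltage assignments with arbitrary values in $\Gamma$ (compare $\alpha(e_i)=\gamma^{a_i}$ with $a_i\in\Z_p$ in Remark \ref{rem:compar}, and the use of (3) after Theorem \ref{thm:ICNF_ref}). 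As written, your (2) and (3) only treat the subring $A[\langle\gamma\rangle]$.

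In (2) the repair is immediate. The elements of $\Gamma$ form an $A$-basis of $A[\Gamma]$, and the $\gamma^k$ ($k\ge 0$) are pairwise distinct members of that basis because $\Gamma$ is torsion-free. So comparing the coefficients of $\gamma^k$ in $f=ag$ still gives $e_k\in aA$, and the rest of your argument stands.

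In (3), the choice $p^n>M-m$ makes no sense once the exponents are $p$-adic, so the key step is missing. What you need is that finitely many distinct elements $\gamma_1,\dots,\gamma_r\in\Gamma$ remain distinct in $\Gamma_n$ for $n\gg 0$. This holds because $\bigcap_n\Gamma^{p^n}=\{1\}$: writing $\gamma_i=\gamma^{a_i}$, take $n>\max_{i\neq j}\ord_p(a_i-a_j)$. This is exactly the step in the paper's proof, and with it your argument coincides with the paper's.

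(A cosmetic point: the diagonal entries of your triangular matrix are all $+1$, not $\pm1$.)
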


\begin{proof}
The ``only if'' parts are all clear and we have to show the ``if'' parts.

(1)
Suppose $f \in A[T] \cap aA[[T]]$.
Then the coefficients of $f$ are all in $aA$, from which we deduce $f \in aA[T]$.

(2)
We have $A[T] = A[1 + T] \simeq A[\gamma^{\bN}]$, where $\gamma^{\bN}$ denotes the submonoid of $\Gamma$ generated by $\gamma$ and $A[\gamma^{\bN}]$ denotes the monoid ring.
Then by the construction of monoid rings, $A[\gamma^{\bN}]$ is a direct summand of $A[\Gamma]$ as $A$-modules.
The claim follows from this.

(3)
Suppose $f \in A[\Gamma]$ goes to an element of $aA[[\Gamma]]$.
We write $f = \sum_i c_i \gamma_i$ as a finite sum, where $c_i \in A$ and the $\gamma_i \in \Gamma$ are distinct.
We can take $n \geq 0$ such that the $\gamma_i$ are still distinct modulo $\Gamma^{p^n}$.
We write $\ol{f} = \sum_i c_i \ol{\gamma_i}$ for the projection to $A[\Gamma_n]$.
The assumption $f \in aA[[\Gamma]]$ implies $\ol{f} \in aA[\Gamma_n]$, so we obtain $c_i \in aA$ for each $i$.
Therefore, we have $f \in a A[\Gamma]$.
\end{proof}

%%%%%%%%%%%%%%%%%%%%%%
\subsection{The analogue of Iwasawa's class number formula}\label{ss:Iw_graph}
%%%%%%%%%%%%%%%%%%%%%%

Let us review the formalism of graphs and $\Z_p$-coverings, and how to compute the $\lambda$- and $\mu$-invariants.
In this subsection we focus on the unramified case; the ramified case is postponed to Subsection \ref{ss:ram_basic}.
For details, one may refer to the original papers by Gonet \cite{Gon22} and McGown--Valli\`{e}res \cite{MV24}.
Subsequent papers by Valli\`{e}res including \cite{DLRV24} also give detailed explanation.
We mainly follow the convention in the author's article \cite{Kata_21}.

%%%%%%%%%%%%%%%%%%%%%%
\subsubsection{Graphs}
%%%%%%%%%%%%%%%%%%%%%%

By a graph we mean a finite multigraph, so the set of vertices and the set of edges are both finite, and multi-edges and loops are allowed.
More formally, we use Serre's formalism on graphs as follows.

A graph $X$ consists of $(V_X, \bE_X, \ol{\cdot}, s, t)$, where $V_X$ and $\bE_X$ are finite sets, $\ol{\cdot}$ is an involution of $\bE_X$ without fixed elements, and $s, t: \bE_X \to V_X$ are maps.
We impose the condition $s(\ol{e}) = t(e)$ and $t(\ol{e}) = s(e)$ for any $e \in \bE_X$.
The elements of $V_X$ and $\bE_X$ are called vertices and edges, respectively.
Each $e \in \bE_X$ is regarded as an edge that connects $s(e)$ to $t(e)$, and $\ol{e}$ is regarded as the opposite of $e$.

%%%%%%%%%%%%%%%%%%%%%%
\subsubsection{Unramified $\Z_p$-coverings} 
%%%%%%%%%%%%%%%%%%%%%%

Let $X$ be a graph.
Let $\Gamma$ be a profinite group, written multiplicatively, that is isomorphic to $\Z_p$.
A voltage assignment $\alpha: \bE_X \to \Gamma$ is by definition a map satisfying
\[
\alpha(\ol{e}) = \alpha(e)^{-1}
\]
for any $e \in \bE_X$.
Associated to such an $\alpha$, we construct the unramified tower of graphs
\[
X = X_0 \leftarrow X_1 \leftarrow X_2 \leftarrow \cdots
\]
as follows.
For each $n \geq 0$, we set $\Gamma_n = \Gamma/\Gamma^{p^n}$ and define the derived graph $X_n$ as
\[
V_{X_n} = \Gamma_n \times V_X,
\quad
\bE_{X_n} = \Gamma_n \times \bE_X
\]
and
\[
\ol{(\gamma, e)} = (\gamma \cdot \alpha(e), \ol{e}),
\quad
s((\gamma, e)) = (\gamma, s(e)),
\quad
t((\gamma, e)) = (\gamma \cdot \alpha(e), t(e))
\]
for each $(\gamma, e) \in \bE_{X_n}$.
For $n' \geq n$, the morphism $X_{n'} \to X_n$ in the tower is induced from the projection $\Gamma_{n'} \to \Gamma_n$.
For each $n$, the group $\Gamma_n$ naturally acts on $X_n$ over $X$.

As long as $X_n$ are all connected, the tower above is an unramified $\Z_p$-covering of connected graphs.
Conversely, every unramified $\Z_p$-covering of connected graphs can be constructed in this manner, up to isomorphism.

%%%%%%%%%%%%%%%%%%%%%%
\subsubsection{The formula}\label{sss:unr_ICNF}
%%%%%%%%%%%%%%%%%%%%%%

Associated to the voltage assignment $\alpha: \bE_X \to \Gamma$, we define 
\[
\cL_{\alpha}: \bigoplus_{v \in V_X} \Z[\Gamma] v \to \bigoplus_{v \in V_X} \Z[\Gamma] v
\]
as the $\Z[\Gamma]$-homomorphism satisfying
\[
\cL_{\alpha}(v) = \sum_{\substack{e \in \bE_{X} \\ s(e) = v}} (s(e) - \alpha(e) t(e))
\]
for each $v \in V_X$.
The matrix representation of $\cL_{\alpha}$ is called the Laplacian matrix.
We define
\[
Z_{\alpha} = \det(\cL_{\alpha}) \in \Z[\Gamma]
\]
as the determinant of the Laplacian matrix.

The following is the analogue of Iwasawa's class number formula with explicit description of the $\lambda$- and $\mu$-invariants.

\begin{thm}\label{thm:ICNF_ref}
Let $X$ be a connected graph.
Let $\alpha: \bE_X \to \Gamma$ be a voltage assignment.
Let $X_{\infty}/X$ be the unramified $\Z_p$-covering constructed by the derived graphs, where we assume that each $X_n$ is connected.
Then
\[
\ord_p(\kappa_{X_n}) = \lambda n + \mu p^n + \nu
\]
holds for $n \gg 0$ with $\lambda = \lambda(Z_{\alpha}) - 1$, $\mu = \mu(Z_{\alpha})$, and some integer $\nu$.
\end{thm}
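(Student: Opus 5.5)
The plan is to reduce the statement to the class number formula of Gonet and McGown--Valli\`{e}res, written via the Ihara/Kirchhoff matrix-tree theorem, and then compare invariants. For each $n$, Kirchhoff's theorem gives $|V_{X_n}|\,\kappa_{X_n} = \prod_{\chi \neq 1} \chi(\text{eigenvalue product})$ in a suitable sense. More precisely, the Laplacian of $X_n$ is the image of $\cL_{\alpha}$ under $\Z[\Gamma] \to \Z[\Gamma_n]$, acting on $\bigoplus_v \Z[\Gamma_n] v \simeq \Z^{V_{X_n}}$. Decomposing by the characters $\chi$ of $\Gamma_n$ (values in $p$-power roots of unity), the determinant of the reduced Laplacian yields
\[
p^n |V_X| \,\kappa_{X_n} = \Bigl(\lim_{\chi \to 1}\tfrac{\chi(Z_\alpha)}{\ldots}\Bigr)\cdot \prod_{\chi \neq 1} \chi(Z_\alpha).
\]
The trivial character factor is $|V_X|\,\kappa_X$, coming from the matrix-tree theorem applied to the derivative: since $Z_\alpha(1)=0$ (the rows sum to zero at $T=0$), we have $Z_\alpha = T\cdot g$ with $g(0)$ equal to $|V_X|\kappa_X$ up to a unit/sign. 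I will take the precise identity $\kappa_{X_n} = \kappa_X \prod_{\chi\ne 1}\chi(Z_\alpha)/p^n$ (up to sign) as the first step, checking it through the standard block-diagonalization over $\Z_p[\zeta_{p^n}]$.

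Second, I rewrite $Z_\alpha$ in $\Z_p[[T]]$ as $T g(T)$, so that $\lambda(g)=\lambda(Z_\alpha)-1$ and $\mu(g)=\mu(Z_\alpha)$. The connectedness of all $X_n$ ensures $\chi(g)\neq 0$ for every nontrivial $\chi$; otherwise some $X_n$ would have zero spanning trees. This is where the hypothesis is used. For $\chi$ of exact order $p^m$, set $\zeta=\chi(\gamma)$, so that $\chi(Z_\alpha)=(\zeta-1)g(\zeta-1)$. The product over the $p^m-p^{m-1}$ primitive characters of $\zeta-1$ is $p$, so $\prod_{\chi\ne1}(\zeta-1)$ contributes exactly $p^n$, which cancels the denominator. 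Hence $\ord_p\kappa_{X_n} = \ord_p\kappa_X + \sum_{m=1}^n \ord_p \prod_{\zeta \text{ prim. } p^m} g(\zeta-1)$.

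Third comes the standard Iwasawa estimate. By Weierstrass preparation, $g = p^{\mu} P(T) u(T)$ with $P$ distinguished of degree $\lambda(g)$ and $u$ a unit. For $m$ large, $\ord_p(\zeta-1)=1/(p^{m-1}(p-1))$ is smaller than the valuation of every root of $P$ (all of which have positive valuation). Hence $\ord_p P(\zeta-1) = \lambda(g)\,\ord_p(\zeta-1)$, and summing over the primitive characters gives $\lambda(g)$. The unit contributes $0$ and $p^\mu$ contributes $\mu(p^m-p^{m-1})$. Summing over $m\le n$ yields $\lambda(g)n + \mu p^n + \nu$ for $n\gg0$, with $\nu$ absorbing the finitely many early terms.

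The main obstacle is the first step: establishing the exact character-product formula for $\kappa_{X_n}$ with the right normalization, in particular identifying the trivial-character contribution with $\kappa_X$ via the factor $T$ in $Z_\alpha$. I would handle this by applying the matrix-tree theorem in the form "product of nonzero eigenvalues of the Laplacian equals $|V|\kappa$." The Laplacian of $X_n$ is then block-diagonal over characters with blocks $\chi(\cL_\alpha)$. The nonzero eigenvalues for $\chi=1$ multiply to $|V_X|\kappa_X$, and for $\chi\neq1$ the determinant of each block is $\chi(Z_\alpha)$, which gives the formula after dividing by $|V_{X_n}|=p^n|V_X|$.
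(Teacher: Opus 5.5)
Your argument is correct. It follows the analytic route of McGown--Valli\`eres \cite{MV24} rather than anything the paper writes out: the paper only cites \cite{Gon22,MV24} for this statement, and the argument it does give, for the ramified generalization Theorem \ref{thm:ICNF_ref_ram}, is module-theoretic.

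Specialized to the unramified case, that argument runs as follows. Here $V_{X,\cI}=V_X$, so the left term of its exact sequence vanishes. It takes from \cite{GV24} (or \cite{Gon22}) that $\ord_p(\kappa_{X_n})$ grows with invariants $\lambda(\Pic_{\Z_p}X_\infty)-1$ and $\mu(\Pic_{\Z_p}X_\infty)$. It then observes that $\Pic_{\Z_p}X_\infty$ is the cokernel of $\cL_\alpha$ on $\bigoplus_v\Z_p[[\Gamma]]v$, whose characteristic ideal is $(Z_\alpha)$. In that route the $-1$ is built into the cited control result; it reflects the degree map $\Pic X_n\to\Z$, whose kernel has order $\kappa_{X_n}$. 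The payoff is conceptual: $Z_\alpha$ becomes a characteristic element of a natural Iwasawa module, and ramification is handled simply by adding the summands $\Z_p[\Gamma/I_v]$.

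Your route avoids control theorems and structure theory entirely. It uses only Kirchhoff's theorem, the character decomposition of the Laplacian of $X_n$ into blocks $\chi(\cL_\alpha)$, and Weierstrass preparation. It yields the exact identity $\kappa_{X_n}=\pm\kappa_X\prod_{\chi\neq1}g(\chi(\gamma)-1)$ for every $n$. In your version the $-1$ appears as the trivial zero $T\mid Z_\alpha$, whose factors $\prod_{\chi\neq1}(\chi(\gamma)-1)=\pm p^n$ cancel $|V_{X_n}|/|V_X|=p^n$.

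One incidental claim is false and should be dropped: $g(0)$ is not $|V_X|\kappa_X$ up to a unit; in fact $g(0)=0$. Since $\iota(Z_\alpha)=Z_\alpha$ (Proposition \ref{prop:iota_inv_Z}), Lemma \ref{lem:inv_even} makes $\ord_T(Z_\alpha)$ even, hence at least $2$ once $Z_\alpha(0)=0$. For example, for a single loop with voltage $\gamma$ one has $Z_\alpha=2-(1+T)-(1+T)^{-1}=-T^2+T^3-\cdots$. Directly, writing $\alpha(e)=\gamma^{a_e}$, the value $\frac{d}{dT}Z_\alpha|_{T=0}$ equals $\kappa_X$ times the sum of all entries of $\frac{d}{dT}\cL_\alpha|_{T=0}$. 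That sum is $-\kappa_X\sum_{e\in\bE_X}a_e=0$, because $a_{\ol{e}}=-a_e$.

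Nothing in your proof depends on this value. The $\chi=1$ block is treated directly by the matrix-tree theorem. Your valuation estimate is also unaffected by $T\mid P$: the root $0$ of $P$ has infinite valuation, so $\ord_p(\zeta-1-0)=\ord_p(\zeta-1)$ as required.
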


Here, $\lambda(Z_{\alpha})$ and $\mu(Z_{\alpha})$ are the $\lambda$- and $\mu$-invariants defined through the embedding $\Z[\Gamma] \subset \Z_p[[\Gamma]]$.
Note that $\mu$ is the largest integer satisfying $Z_{\alpha} \in p^{\mu} \Z_p[[\Gamma]]$, but by Lemma \ref{lem:divis1}(3) this is equivalent to $Z_{\alpha} \in p^{\mu} \Z_p[\Gamma]$, which is in turn equivalent to $Z_{\alpha} \in p^{\mu} \Z[\Gamma]$.
If we know $Z_{\alpha} \in \Z[T]$ under Serre's isomorphism (which depends on the choice of a topological generator $\gamma$), Lemma \ref{lem:divis1}(2) enables us to further rephrase this condition to $Z_{\alpha} \in p^{\mu} \Z[T]$.

%%%%%%%%%%%%%%%%%%%%%%
\section{The parity condition}\label{sec:nec}
%%%%%%%%%%%%%%%%%%%%%%

%%%%%%%%%%%%%%%%%%%%%%
\subsection{Statement and a reduction step}
%%%%%%%%%%%%%%%%%%%%%%

In this section, we show the following parity condition.

\begin{prop}\label{prop:odd}
For any unramified $\Z_p$-covering $X_{\infty}/X$ of connected graphs, $\lambda(X_{\infty}/X)$ is odd.
\end{prop}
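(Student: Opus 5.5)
The plan is to show that $\lambda(Z_{\alpha})$ is even; then Theorem \ref{thm:ICNF_ref} gives that $\lambda(X_{\infty}/X) = \lambda(Z_{\alpha}) - 1$ is odd. Since every unramified $\Z_p$-covering of connected graphs arises from a voltage assignment $\alpha$ with all $X_n$ connected, this suffices. We may assume $Z_{\alpha} \neq 0$, since otherwise the invariants in Theorem \ref{thm:ICNF_ref} would not be defined. The source of evenness is a symmetry of $Z_{\alpha}$ under the involution $\iota$ of $\Z[\Gamma]$ induced by $\gamma \mapsto \gamma^{-1}$.

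\textbf{Step 1: $Z_{\alpha}$ is $\iota$-invariant.} We write the Laplacian matrix $M$ with respect to the basis $V_X$, so that $M \in M_{|V_X|}(\Z[\Gamma])$. An edge $e$ with $s(e) = v$ and $t(e) = w$ contributes $-\alpha(e)$ to one off-diagonal entry indexed by $(v,w)$. Its opposite $\ol{e}$ contributes $-\alpha(\ol{e}) = -\alpha(e)^{-1}$ to the entry indexed by $(w,v)$. Loops contribute terms $1 - \alpha(e)$ and $1 - \alpha(e)^{-1}$ to the same diagonal entry. Hence $M^{\iota} = M^{\mathrm{t}}$, where $\iota$ is applied entrywise. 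Taking determinants gives $Z_{\alpha}^{\iota} = \det(M^{\mathrm{t}}) = Z_{\alpha}$.

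\textbf{Step 2: $\iota$-invariant elements are polynomials in $s = \gamma + \gamma^{-1} - 2$.} The ring $\Z[\Gamma]$ is a union of Laurent polynomial rings $\Z[\delta, \delta^{-1}]$ over elements $\delta \in \Gamma$. Any given finite set of elements of $\Gamma$ lies in one cyclic subgroup $\delta^{\Z}$ when $\Gamma \simeq \Z_p$. In fact, we can stay inside $\Z[\gamma^{1/p^m}, \gamma^{-1/p^m}]$ for a suitable power. We may therefore take $\delta$ to be a $p^m$-th root of $\gamma$, or simply $\gamma$ after rechoosing the topological generator, which does not affect the invariants.

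In $\Z[\delta^{\pm 1}]$, an $\iota$-invariant Laurent polynomial is a $\Z$-polynomial in $\delta + \delta^{-1}$. This follows by induction on the top degree $d$, subtracting $c_d(\delta + \delta^{-1})^d$, which is monic. So $Z_{\alpha} = g(s_{\delta})$ with $g \in \Z[x]$ and $s_{\delta} = \delta + \delta^{-1} - 2$.

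\textbf{Step 3: computing $\lambda$.} Under Serre's isomorphism with respect to $\delta$, we have $s_{\delta} = T^2/(1+T)$, that is, $T^2$ times a unit. Write $g = p^{\mu} g_0$ with $\ol{g_0} \neq 0$ in $\F_p[x]$. Write $\ol{g_0} = x^k h(x)$ with $h(0) \neq 0$. Then
\[
\ol{p^{-\mu} Z_{\alpha}} = \ol{g_0}(s_{\delta}) = T^{2k}(1+T)^{-k} h(s_{\delta}),
\]
and $h(s_{\delta})$ is a unit in $\F_p[[T]]$. Lemma \ref{lem:divis1} guarantees that $\mu(Z_{\alpha})$ is detected in $\Z[x]$, i.e., that $\mu(Z_{\alpha}) = \mu$. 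Therefore $\lambda(Z_{\alpha}) = 2k$ is even. This argument works uniformly for $p = 2$, unlike the constant-term comparison $(-1)^{\lambda} = 1$, which only succeeds for $p$ odd.

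The main obstacle I expect is bookkeeping rather than substance. First, one must check the transpose symmetry carefully, including loops and multi-edges. Second, one must handle the passage from the dense group $\Gamma$ to a cyclic subgroup containing all voltages, making sure the Iwasawa invariants are unchanged when the generator is changed. For the latter I would use the stated independence of $\lambda, \mu$ from the choice of topological generator.
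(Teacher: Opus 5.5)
\textbf{There is a genuine gap in Step 2.} Your Steps 1 and 3 are fine, but Step 2 rests on a false claim: $\Gamma \simeq \Z_p$ is not locally cyclic. A finite set of voltages need not lie in any cyclic subgroup $\delta^{\Z}$, so $\Z[\Gamma]$ is not the union of the rings $\Z[\delta^{\pm 1}]$. Voltages are arbitrary elements $\gamma^{a}$ with $a \in \Z_p$ (cf.\ Remark \ref{rem:compar}).

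Here is a concrete counterexample. Take a bouquet with two loops carrying $\gamma$ and $\gamma^{a}$, where $a \in \Z_p \setminus \mathbb{Q}$. The covering is connected, and $Z_{\alpha} = 4 - \gamma - \gamma^{-1} - \gamma^{a} - \gamma^{-a}$ is $\iota$-invariant. Yet $Z_{\alpha}$ is not of the form $q(\delta + \delta^{-1})$ with $q \in \Z[x]$ for any $\delta \in \Gamma$: its support would have to lie in $\delta^{\Z}$, which forces $a \in \mathbb{Q}$.

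Your fallbacks do not help. A topological generator has no $p^m$-th root in $\Gamma$, and $p$-power roots could only ever produce exponents in $\Z[1/p]$. Changing the topological generator rescales all exponents by a common unit of $\Z_p$, so it preserves their ratios. This is therefore not bookkeeping: the Laurent-polynomial form of your structure statement is unavailable for general voltage assignments.

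\textbf{The fix.} The missing idea is to prove the structure statement in the completion $\Z_p[[T]]$, where $\gamma^{a} = (1+T)^{a}$ makes sense for every $a \in \Z_p$. There, every $\iota$-invariant $f$ equals $g(S)$ with $g \in \Z_p[[S]]$ and $S = T + \iota(T) = \gamma + \gamma^{-1} - 2$, which is exactly your $s_{\delta}$.

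\begin{itemize}
\item Your induction on the top degree must be replaced by an induction on $\ord_T$.
\item The input it needs is that a nonzero $\iota$-invariant series has even $T$-order. For odd $n$, comparing the coefficients of $T^n$ in $f$ and $\iota(f)$ gives $2a_n = 0$, hence $a_n = 0$ in $\Z_p$.
\end{itemize}

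This is precisely the paper's route (Proposition \ref{prop:iota_inv_Z}, Lemmas \ref{lem:inv_even} and \ref{lem:inv_exp}, Proposition \ref{prop:inv_g}). Once you have $g \in \Z_p[[S]]$, your Step 3 goes through verbatim, including for $p = 2$.

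\textbf{An alternative fix.} You could instead keep your finite argument by adding an approximation step, which your proposal currently lacks.
\begin{itemize}
\item First divide $Z_{\alpha}$ by $p^{\mu(Z_{\alpha})}$ inside $\Z[\Gamma]$, which is legitimate by Lemma \ref{lem:divis1}.
\item Then replace each pair of exponents $\pm a$ by $\pm b$ with $b \in \Z$ and $b \equiv a \bmod p^N$.
\item This preserves $\iota$-invariance and changes the element only within $(p,T)^{N+1}$. Such a change cannot alter $\lambda$ once $\mu = 0$ and $N \geq \lambda$.
\end{itemize}
The division by $p^{\mu}$ is essential, because perturbations of this kind can destroy a positive $\mu$.
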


This proposition is shown in \cite[Theorem 2.18]{DLRV24} in the case where $p \geq 3$.
The case $p = 2$ will require an additional technique (see Lemma \ref{lem:inv_even}).
In any case, the following proposition is the key to the proof.
Let $\Gamma$ be a profinite group isomorphic to $\Z_p$.
Let $\iota: \Z_p[[\Gamma]] \to \Z_p[[\Gamma]]$ be the involution that inverts every element of $\Gamma$.

\begin{prop}\label{prop:iota_inv_Z}
Let $X$ be a graph and $\alpha: \bE_X \to \Gamma$ a voltage assignment.
Then $\iota(\cL_{\alpha})$ is equal to the transpose of $\cL_{\alpha}$, and thus $\iota(Z_{\alpha}) = Z_{\alpha}$ holds.
\end{prop}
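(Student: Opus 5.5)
We will compute the matrix entries of $\cL_{\alpha}$ explicitly with respect to the basis $(v)_{v \in V_X}$ and compare entry $(v,w)$ with entry $(w,v)$ after applying $\iota$. By definition,
\[
\cL_{\alpha}(v) = \deg(v)\, v - \sum_{w \in V_X} \Bigl( \sum_{\substack{e \in \bE_X \\ s(e) = v,\ t(e) = w}} \alpha(e) \Bigr) w,
\]
where $\deg(v) = \#\{e \in \bE_X : s(e) = v\}$. Writing the matrix with the convention that column $v$ records the coordinates of $\cL_{\alpha}(v)$ (the argument is symmetric under the other convention), the entry in row $w$, column $v$ is
\[
\delta_{v,w} \deg(v) - \sum_{e : v \to w} \alpha(e),
\]
where $e : v \to w$ means $s(e) = v$ and $t(e) = w$.

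The key step is the observation that the involution $e \mapsto \ol{e}$ gives a bijection between $\{e : v \to w\}$ and $\{e : w \to v\}$, because $s(\ol{e}) = t(e)$ and $t(\ol{e}) = s(e)$. The voltage condition $\alpha(\ol{e}) = \alpha(e)^{-1}$ then yields
\[
\iota\Bigl( \sum_{e : v \to w} \alpha(e) \Bigr) = \sum_{e : v \to w} \alpha(\ol{e}) = \sum_{e' : w \to v} \alpha(e').
\]
Since $\iota$ fixes integers, the diagonal parts agree as well; for $v = w$ the same bijection applies to loops at $v$. It follows that the $(w,v)$ entry of $\iota(\cL_{\alpha})$ equals the $(v,w)$ entry of $\cL_{\alpha}$, so $\iota(\cL_{\alpha}) = {}^t\cL_{\alpha}$.

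For the determinant, note that $\iota$ is a ring automorphism of the commutative ring $\Z[\Gamma]$, so it commutes with the determinant polynomial: $\iota(\det M) = \det(\iota(M))$. Hence
\[
\iota(Z_{\alpha}) = \det(\iota(\cL_{\alpha})) = \det({}^t\cL_{\alpha}) = Z_{\alpha}.
\]
There is no real obstacle here. The only care needed is the bookkeeping for loops, which need not be treated separately because the bijection $e \mapsto \ol{e}$ preserves the set of loops at $v$, and the choice of row/column convention, which does not affect the conclusion since transposition is symmetric.
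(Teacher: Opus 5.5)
Your proposal is correct and matches the paper's approach: the paper just says the claim follows from the definitions (citing \cite[Theorem 2.18]{DLRV24}), and your entrywise check spells out that verification. The two ingredients are the bijection $e \mapsto \ol{e}$ between edges $v \to w$ and edges $w \to v$ together with $\alpha(\ol{e}) = \alpha(e)^{-1}$, and the fact that $\iota$ is a ring automorphism of $\Z[\Gamma]$, which gives $\iota(Z_{\alpha}) = \det({}^t\cL_{\alpha}) = Z_{\alpha}$.
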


\begin{proof}
This follows from the definitions, as already shown in \cite[Theorem 2.18]{DLRV24}.
\end{proof}

This proposition leads us to study $\iota$-invariant elements of $\Z_p[[\Gamma]]$.
Indeed, Proposition \ref{prop:inv_g} claims that the $\lambda$-invariant of every $\iota$-invariant element is even.
Then Proposition \ref{prop:odd} follows since $\lambda(X_{\infty}/X) = \lambda(Z_{\alpha}) - 1$ by Theorem \ref{thm:ICNF_ref} with a suitable voltage assignment $\alpha$.

%%%%%%%%%%%%%%%%%%%%%%
\subsection{$\iota$-invariant power series}\label{ss:power_series}
%%%%%%%%%%%%%%%%%%%%%%

Let us study $\iota$-invariant power series.
Let $A$ be a commutative ring.
We define an $A$-algebra homomorphism
\[
\iota: A[[T]] \to A[[T]],
\quad
\iota(f) = f((1 + T)^{-1} - 1),
\]
noting that
\[
\iota(T) = (1+T)^{-1} - 1 = -T + T^2 - T^3 + \cdots \in T A[[T]].
\]
More illuminating property is that $\iota(1 + T) = (1 + T)^{-1}$, which also implies that $\iota$ is an involution.
Under Serre's isomorphism $\Z_p[[\Gamma]] \simeq \Z_p[[T]]$, this $\iota$ equals the $\iota$ that inverts every group element.

The set of $\iota$-invariant power series is denoted by
\[
A[[T]]^{\lrang{\iota}} := \{ f \in A[[T]] \mid \iota(f) = f\}.
\]
For a power series $f \in A[[T]] \setminus \{0\}$, its order $\ord_T(f)$ is defined as the largest integer $n$ such that $f \in T^n A[[T]]$.

\begin{lem}\label{lem:inv_even}
For any $f \in A[[T]]^{\lrang{\iota}} \setminus \{0\}$, the order $\ord_T(f)$ is even.
\end{lem}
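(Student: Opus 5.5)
The plan is to avoid comparing leading coefficients directly. If $f = aT^n + (\text{higher terms})$ with $a \neq 0$, then $\iota(f) = (-1)^n a T^n + \cdots$, so invariance only yields $2a = 0$ when $n$ is odd. That is no contradiction when $A$ has $2$-torsion (e.g.\ $p = 2$ situations with $A = \F_2$). Instead, I will show that every $\iota$-invariant power series is a power series in an explicit $\iota$-invariant element of order exactly $2$ whose leading coefficient is $1$. Evenness of the order then follows without ever dividing by $2$.

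Set
\[
s := T + \iota(T) = T - \frac{T}{1+T} = \frac{T^2}{1+T} \in A[[T]].
\]
Then $\iota(s) = s$, and $T^2 = s(1+T)$. Also $s^k = T^{2k}(1+T)^{-k}$ has lowest term exactly $T^{2k}$ with coefficient $1$, so $s$ is a non-zero-divisor. The first step is to show that every $f \in A[[T]]$ can be written as
\[
f = g(s) + h(s)\, T, \qquad g, h \in A[[X]] .
\]
For existence I use successive approximation. Let $c_0, c_1$ be the coefficients of $T^0, T^1$ in $f$. Then $f - c_0 - c_1 T \in T^2A[[T]] = sA[[T]]$, since $1+T$ is a unit. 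Dividing by $s$ and repeating produces $g$ and $h$ coefficient by coefficient. The process converges $T$-adically because the $k$-th step contributes terms in $T^{2k}A[[T]]$.

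For uniqueness, suppose $g(s) + h(s)T = 0$ with $(g, h) \neq (0, 0)$. If $g \neq 0$ and its lowest coefficient is $c X^a$, then $g(s)$ has lowest term $cT^{2a}$. Likewise, if $h \neq 0$ with lowest coefficient $d X^b$, then $h(s)T$ has lowest term $dT^{2b+1}$. These orders have different parities, so the lowest term of the nonzero summand cannot be cancelled, and the sum is nonzero. This contradiction gives $g = h = 0$.

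Now let $f = g(s) + h(s)T$ be $\iota$-invariant. Since $\iota(T) = s - T$, we get
\[
\iota(f) = \bigl(g(s) + h(s)s\bigr) - h(s)T .
\]
Comparing with $f$ using uniqueness gives $h(s)s = 0$ and $2h(s) = 0$. Since $s$ is a non-zero-divisor, $h(s) = 0$, and hence $f = g(s)$. As computed above, if $g$ has lowest coefficient $cX^a$ then $\ord_T(f) = 2a$, which is even.

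The main obstacle is the uniqueness of the decomposition: it is exactly what replaces the failed leading-coefficient argument. The parity trick above handles it cleanly even over rings with zero divisors, because the relevant leading coefficients of $s^a$ and $s^b T$ are $1$.
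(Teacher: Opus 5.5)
Your proof is correct, but it takes a different route from the paper's.

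\textbf{The paper's route.} The paper repairs the leading-coefficient argument you set aside. Write $f = a_nT^n + a_{n+1}T^{n+1} + \cdots$ with $n$ odd. In $\iota(f)$, the coefficient of $T^n$ is $-a_n$ and the coefficient of $T^{n+1}$ is $(-1)^{n+1}(na_n + a_{n+1})$. So $\iota(f) = f$ gives both $2a_n = 0$ and $na_n = 0$. Since $n$ is odd, these force $a_n = 0$. Looking at one extra coefficient is therefore enough to deal with $2$-torsion in $A$.

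\textbf{Your route.} You show that $A[[T]]$ is free over $A[[s]]$ with basis $\{1, T\}$, and that $\iota$ acts on coordinates by $(g,h) \mapsto (g + Xh, -h)$. Since $X$ is a non-zero-divisor, this forces $h = 0$. Note that the relation $2h = 0$ is not even needed, so no arithmetic involving $2$ and $n$ ever enters.

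\textbf{What each route buys.} Your argument is longer than the paper's two-line computation, but it proves more. It shows directly that every $\iota$-invariant series is uniquely a power series in $s = T + \iota(T)$, which is the content of Lemma \ref{lem:inv_exp}. The paper obtains that lemma only afterwards, by an induction that invokes Lemma \ref{lem:inv_even} at each step.

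\textbf{A tacit step.} You use without comment that $\iota(g(s)) = g(\iota(s))$ for $g \in A[[X]]$. This holds because $\iota$ maps $T^kA[[T]]$ into itself, so $\iota$ is $T$-adically continuous and commutes with the infinite sum defining $g(s)$.
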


\begin{proof}
For any odd integer $n \geq 1$, let us show that $f \in T^n A[[T]]$ implies $f \in T^{n+1} A[[T]]$, from which the lemma follows.
We may write
\[
f(T) = a_n T^n + a_{n+1} T^{n+1} + \cdots.
\]
Then 
\begin{align*}
\iota(f) 
& = f((1+T)^{-1} - 1)
= f(-T + T^2 - \cdots)\\
& = a_n(-T)^n + n a_n (-T)^{n-1} T^2 + a_{n+1} (-T)^{n+1} + \cdots,
\end{align*}
so $f = \iota(f)$ implies
\[
a_n = (-1)^n a_n,
\quad
a_{n+1} = (-1)^{n+1}(na_n + a_{n+1}).
\]
Since $n$ is odd, we obtain
\[
2 a_n = 0,
\quad
n a_n = 0.
\]
Since $n$ is odd again, these imply $a_n = 0$.
Therefore, we have $f \in T^{n+1}A[[T]]$.
\end{proof}

In this proof, if $2$ is a non-zero-divisor in $A$, we only have to look at the coefficients of $T^n$.

The following lemma will be also useful in the subsequent sections.

\begin{lem}\label{lem:inv_exp}
We have an $A$-algebra isomorphism
\[
A[[S]] \overset{\simeq}{\to} A[[T]]^{\lrang{\iota}}
\]
given by $S \mapsto T + \iota(T) = T^2 - T^3 + \cdots$.
In other words, every $\iota$-invariant power series can be written as a power series of $T + \iota(T)$ in a unique way.
\end{lem}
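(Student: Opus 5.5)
Set $\sigma = T + \iota(T)$; expanding gives $\sigma = T^2 - T^3 + \cdots$. So $\sigma \in T^2 A[[T]]$ has leading coefficient $1$, a unit. First, the map $\phi: A[[S]] \to A[[T]]$, $S \mapsto \sigma$, is a well-defined $A$-algebra homomorphism: it is substitution of an element of $T A[[T]]$, and for $g = \sum b_k S^k$ the series $\sum b_k \sigma^k$ converges $T$-adically because $\sigma^k \in T^{2k}A[[T]]$. Its image lies in $A[[T]]^{\lrang{\iota}}$, since $\iota$ is an involution, so $\iota(\sigma) = \iota(T) + T = \sigma$. Moreover $\iota$ is a continuous $A$-algebra endomorphism (it is itself a substitution), so $\iota(\phi(g)) = \sum b_k \iota(\sigma)^k = \phi(g)$.

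Injectivity follows from a leading-term argument. If $g \neq 0$ has lowest nonzero coefficient $b_k$, then $\phi(g) = b_k T^{2k} + (\text{higher order terms})$, because $\sigma^k = T^{2k}(1 - T + \cdots)^k$ has leading coefficient $1$. Hence $\phi(g) \neq 0$.

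Surjectivity is the main step, and I would prove it by successive approximation, using Lemma \ref{lem:inv_even} at each stage. Let $f \in A[[T]]^{\lrang{\iota}}$. I construct $b_0, b_1, \ldots \in A$ inductively so that
\[
f_k := f - \sum_{j<k} b_j \sigma^j \in T^{2k} A[[T]].
\]
Each $f_k$ is $\iota$-invariant, being a difference of $\iota$-invariant series. If $f_k = 0$, set all remaining $b_j = 0$. Otherwise Lemma \ref{lem:inv_even} says $\ord_T(f_k)$ is even; since it is at least $2k$, we get $f_k = c\,T^{2k} + (\text{terms of order} \geq 2k+1)$ for some $c \in A$. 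Put $b_k = c$. Then
\[
f_{k+1} = f_k - c\,\sigma^k \in T^{2k+1}A[[T]],
\]
and $f_{k+1}$ is still $\iota$-invariant. Applying the lemma again (when $f_{k+1} \neq 0$), its order is even, hence at least $2k+2$. This closes the induction. It follows that $f - \phi\bigl(\sum_k b_k S^k\bigr)$ lies in $\bigcap_k T^{2k} A[[T]] = 0$, so $f = \phi\bigl(\sum_k b_k S^k\bigr)$.

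The only subtle point is this parity jump from order $2k+1$ to order $2k+2$. Lemma \ref{lem:inv_even} handles it over an arbitrary $A$, including $2$-torsion rings, so no division by $2$ is needed; without the lemma, the argument would break exactly at that step. Uniqueness of the representation as a power series in $T + \iota(T)$ is the injectivity shown above.
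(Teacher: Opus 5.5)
Your proof is correct and follows the same route as the paper. Both construct the $b_k$ by successive approximation, taking $b_k$ to be the $T^{2k}$-coefficient of $f_k$ and using Lemma \ref{lem:inv_even} to force the jump from order $2k+1$ to order $2k+2$; you additionally spell out well-definedness, $\iota$-invariance of the image, and injectivity via the leading term $b_kT^{2k}$, which the paper dismisses as clear.
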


\begin{proof}
The well-definedness of the map and its injectivity are clear.
To show the surjectivity, let us take any $f \in A[[T]]^{\lrang{\iota}}$.
We shall construct $b_0, b_1, b_2, \dots \in A$ satisfying
\[
f(T) = b_0 + b_1(T+\iota(T)) + b_2(T+\iota(T))^2 + \cdots.
\]
The construction is inductive:
For each $k \geq 0$, the elements are constructed so that
\[
f_k(T) := f(T) - (b_0 + b_1(T+\iota(T)) + \dots + b_{k-1}(T+\iota(T))^{k-1})
\]
satisfies $\ord_T(f_k) \geq 2k$.

Let $k \geq 0$ and suppose that we have $b_0, b_1, \dots, b_{k-1} \in A$ such that $f_k(T)$ satisfies $\ord_T(f_k) \geq 2k$.
We define $b_k \in A$ to be the coefficient of $T^{2k}$ in $f_k(T)$, and this defines 
\[
f_{k+1}(T) = f_k(T) - b_k(T + \iota(T))^k.
\]
Then, since $T + \iota(T) = T^2 - T^3 + \cdots$, we have $\ord_T(f_{k+1}) \geq 2k+1$.
Moreover, since $f_{k+1}$ is also $\iota$-invariant, Lemma \ref{lem:inv_even} implies that $\ord_T(f_{k+1}) \geq 2k+2$.
This completes the induction.
\end{proof}

Now we specialize to $A = \Z_p$.
We not only show that $\lambda(f)$ is even for any $f \in \Z_p[[T]]^{\lrang{\iota}}$, but also obtain a refined description that will be used in the subsequent sections.

\begin{prop}\label{prop:inv_g}
For any $f \in \Z_p[[T]]^{\lrang{\iota}}$, let $g(S) \in \Z_p[[S]]$ be the unique power series satisfying $g(T + \iota(T)) = f(T)$ given in Lemma \ref{lem:inv_exp}.
Then we have
\[
\lambda(f) = 2 \lambda_S(g),
\quad
\mu(f) = \mu_S(g),
\]
where $\lambda_S(-)$ and $\mu_S(-)$ denote the $\lambda$- and $\mu$-invariants for the algebra $\Z_p[[S]]$.
In particular, $\lambda(f)$ is even.
\end{prop}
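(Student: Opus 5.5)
We handle the two invariants separately, writing $\sigma = T + \iota(T) = T^2 - T^3 + \cdots \in \Z_p[[T]]$. By Lemma \ref{lem:inv_exp} the substitution $S \mapsto \sigma$ is an isomorphism $\Z_p[[S]] \simeq \Z_p[[T]]^{\lrang{\iota}}$, and $f = g(\sigma)$. We assume $f \neq 0$ (equivalently $g \neq 0$) so that the invariants are defined.

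For the $\mu$-invariant we show that, for every $m \geq 0$,
\[
f \in p^m \Z_p[[T]] \iff g \in p^m \Z_p[[S]].
\]
The direction ``$\Leftarrow$'' is immediate: if $g = p^m h$, then $f = p^m h(\sigma)$. For ``$\Rightarrow$'', suppose $f = p^m f'$ with $f' \in \Z_p[[T]]$. Since $\Z_p[[T]]$ is torsion-free, $\iota(f') = f'$ follows from $p^m\iota(f') = p^m f'$. Lemma \ref{lem:inv_exp} then gives $f' = h(\sigma)$ for a unique $h \in \Z_p[[S]]$. Uniqueness in Lemma \ref{lem:inv_exp}, i.e.\ injectivity of $S \mapsto \sigma$, together with $g(\sigma) = p^m h(\sigma)$, yields $g = p^m h$. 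Hence $\mu(f) = \mu_S(g)$.

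For the $\lambda$-invariant, set $\mu = \mu(f) = \mu_S(g)$ and pass to $f_0 = p^{-\mu} f$ and $g_0 = p^{-\mu} g$. These still satisfy $f_0 = g_0(\sigma)$, and $g_0 \notin p\Z_p[[S]]$. Now reduce modulo $p$: the reduction map is a ring homomorphism compatible with substitution, so $\ol{f_0} = \ol{g_0}(\ol{\sigma})$ in $\F_p[[T]]$. Here $\ol{\sigma} = T^2 \cdot u$ with $u = 1 - T + T^2 - \cdots$ a unit, so $\ord_T(\ol{\sigma}) = 2$. Since $\F_p[[T]]$ is a discrete valuation ring, if $\ol{g_0} = c S^{k} + (\text{higher})$ with $c \neq 0$ and $k = \lambda_S(g)$, then $\ol{g_0}(\ol{\sigma}) = c\,T^{2k}u^k(1 + O(T^2))$. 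This has $T$-order exactly $2k$, and therefore $\lambda(f) = 2\lambda_S(g)$, which is even.

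The only delicate point is the ``$\Rightarrow$'' direction for $\mu$: we need that divisibility of the $\iota$-invariant element $f$ by $p^m$ transfers to $g$. This rests on $p$-torsion-freeness, to keep the quotient $f'$ $\iota$-invariant, and on the uniqueness part of Lemma \ref{lem:inv_exp}. An alternative is to apply Lemma \ref{lem:inv_exp} directly with $A = \Z/p^m\Z$, compatibly with the case $A = \Z_p$. The rest is a direct valuation computation.
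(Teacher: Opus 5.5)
Your proof is correct and takes essentially the same approach as the paper, which simply says both formulas follow from $T + \iota(T) = T^2 - T^3 + \cdots$. Your argument fills this in: you use torsion-freeness plus injectivity of $S \mapsto \sigma$ to transfer $p$-divisibility (giving the $\mu$ formula), and $\sigma = T^2/(1+T)$, so $\ord_T(\ol{\sigma}) = 2$ in the discrete valuation ring $\F_p[[T]]$ (giving the $\lambda$ formula).
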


\begin{proof}
The formulas follow easily from $T + \iota(T) = T^2 - T^3 + \cdots$.
\end{proof}

This also completes the proof of Proposition \ref{prop:odd}.

%%%%%%%%%%%%%%%%%%%%%%
\section{Proof of Theorem \ref{thm:main}}\label{sec:bouquets}
%%%%%%%%%%%%%%%%%%%%%%

In this and the next sections, we focus on bouquets.

%%%%%%%%%%%%%%%%%%%%%%
\subsection{$\Z_p$-coverings of bouquets}\label{ss:basic_bouquet}
%%%%%%%%%%%%%%%%%%%%%%

We review formulas for the $\lambda$- and $\mu$-invariants for bouquets, which were essentially already obtained in earlier work, such as \cite{Val21}.
For a bouquet $X$ and a voltage assignment $\alpha: \bE_X \to \Gamma$, it is known that $X_n$ are all connected if and only if the image of $\alpha$ is not contained in $\Gamma^p$.

\begin{prop}\label{prop:bou_compu}
Let $X$ be a bouquet, so $V_X$ is a singleton.
Let $\alpha: \bE_X \to \Gamma$ be a voltage assignment whose image is not contained in $\Gamma^p$, and let $X_{\infty}/X$ be the derived unramified $\Z_p$-covering.
Let $\bE_X = \{e_1, \ol{e_1}, \dots, e_t, \ol{e_t} \}$ and we set
\[
h_{\alpha} = \sum_{i=1}^t (1-\alpha(e_i)) \in \Z[\Gamma].
\]
Then we have $\lambda(X_{\infty}/X) = \lambda(h_{\alpha} + \iota(h_{\alpha})) - 1$ and $\mu(X_{\infty}/X) = \mu(h_{\alpha} + \iota(h_{\alpha}))$.
\end{prop}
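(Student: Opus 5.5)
The plan is to compute $Z_{\alpha}$ explicitly for a bouquet and then apply Theorem \ref{thm:ICNF_ref}. Since $V_X = \{v\}$ is a singleton, the Laplacian $\cL_{\alpha}$ is a $1 \times 1$ matrix, so $Z_{\alpha}$ is simply the coefficient of $v$ in $\cL_{\alpha}(v)$. By definition,
\[
\cL_{\alpha}(v) = \sum_{\substack{e \in \bE_X \\ s(e) = v}} (v - \alpha(e) v) = \sum_{e \in \bE_X} (1 - \alpha(e)) v,
\]
because every edge has source $v$.

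Next I will split $\bE_X = \{e_1, \ol{e_1}, \dots, e_t, \ol{e_t}\}$ into the $e_i$ and their opposites. The voltage condition $\alpha(\ol{e_i}) = \alpha(e_i)^{-1} = \iota(\alpha(e_i))$ gives
\[
Z_{\alpha} = \sum_{i=1}^t (1 - \alpha(e_i)) + \sum_{i=1}^t (1 - \alpha(e_i)^{-1}) = h_{\alpha} + \iota(h_{\alpha}),
\]
since $\iota$ is a ring homomorphism fixing $1$.

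The hypothesis that the image of $\alpha$ is not contained in $\Gamma^p$ ensures, by the fact recalled just before the proposition, that every $X_n$ is connected. Theorem \ref{thm:ICNF_ref} therefore applies and gives $\lambda(X_{\infty}/X) = \lambda(Z_{\alpha}) - 1$ and $\mu(X_{\infty}/X) = \mu(Z_{\alpha})$, which is the claim.

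The only point needing care is that $Z_{\alpha} \neq 0$, so that the invariants are defined. This is implicit in Theorem \ref{thm:ICNF_ref}, and it can also be seen directly: choose $i$ with $\alpha(e_i) \notin \Gamma^p$ and look at the image in $\F_p[\Gamma_1]$. I do not expect any real obstacle; this is a direct unwinding of the definitions.
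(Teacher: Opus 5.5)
Your proof is the same as the paper's: identify $\cL_{\alpha}$ with the scalar $h_{\alpha}+\iota(h_{\alpha})$ using $\alpha(\ol{e_i})=\alpha(e_i)^{-1}$, then apply Theorem \ref{thm:ICNF_ref}. One caveat concerns your optional aside: reducing to $\F_p[\Gamma_1]$ cannot show $Z_{\alpha}\neq 0$ in general. For example, $p$ loops all with voltage $\gamma$ give $Z_{\alpha}=p(2-\gamma-\gamma^{-1})$, which vanishes there. Instead, observe that the coefficient of $1$ in $Z_{\alpha}\in\Z[\Gamma]$ is $2\#\{i : \alpha(e_i)\neq 1\}>0$.
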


\begin{proof}
Since $V_X$ is a singleton, the Laplacian matrix is identified with the scalar
\[
\cL_{\alpha} 
= \sum_{i=1}^t (1-\alpha(e_i)) + \sum_{i=1}^t (1-\alpha(e_i)^{-1})
= h_{\alpha} + \iota(h_{\alpha}),
\]
to which $Z_{\alpha}$ is also equal.
Therefore, the proposition follows from Theorem \ref{thm:ICNF_ref}.
\end{proof}

\begin{rem}\label{rem:compar}
In \cite{DLRV24}, to study the $\lambda$- and $\mu$-invariants, we write $\alpha(e_i) = \gamma^{a_i}$ with $a_i \in \Z_p$ and then consider the binomial expansion
\begin{align*}
h_{\alpha} + \iota(h_{\alpha})
& = \sum_{i=1}^t (1 - (1 + T)^{a_i}) + \sum_{i=1}^t (1 - (1 + T)^{-a_i})\\
& = - \sum_{k=1}^{\infty} \sum_{i=1}^t \left( \binom{a_i}{k} + \binom{-a_i}{k} \right) T^k.
\end{align*}
The binomial coefficients are defined by
\[
\binom{a}{k} := \frac{a(a-1) \cdots (a-k+1)}{k!}.
\]
Then the basic strategy in \cite{DLRV24} is to investigate directly the $p$-adic valuations of the coefficients of $h_{\alpha} + \iota(h_{\alpha})$.
An obstruction to this strategy is the presence of the denominator $k!$ in the binomial coefficients, which leads to an additional assumption that $\lambda$ is small compared to $p$ (e.g., in \cite[Lemma 4.12]{DLRV24}).
The idea of the present paper is to avoid the binomial expansion by instead using an expansion in the variable $S = T + \iota(T)$, which makes the computation easier.
\end{rem}

For convenience, we give a name to the elements $h_{\alpha}$ when $\alpha$ varies.

\begin{defn}\label{defn:adm}
We say an element $h \in \Z[\Gamma]$ {\it admissible} if the following conditions hold.
\begin{itemize}
\item[(a)]
We have $\aug(h) = 0$ for the augmentation map $\aug: \Z[\Gamma] \to \Z$.
\item[(b)]
The coefficients of $h$, except for the identity element, are all non-positive.
\item[(c)]
We have $h \not \in \Z[\Gamma^p]$ (for the connectedness).
\end{itemize}
\end{defn}

Then each $h_{\alpha}$ is admissible and, conversely, admissible elements are realized as $h_{\alpha}$.
Our task is now to study the Iwasawa invariants of $h + \iota(h)$ for admissible elements $h \in \Z[\Gamma]$.

%%%%%%%%%%%%%%%%%%%%%%
\subsection{Study of $h + \iota(h)$}\label{ss:power_series_2}
%%%%%%%%%%%%%%%%%%%%%%

Let $A$ be again any commutative ring.
In this subsection, we study $h + \iota(h)$ for $h \in A[[T]]$.

\begin{defn}\label{defn:Phi}
For any $h \in A[[T]]$, since $h + \iota(h) \in A[[T]]^{\lrang{\iota}}$, we may apply Lemma \ref{lem:inv_exp} to construct a unique $g \in A[[S]]$ such that
\[
g(T + \iota(T)) = h + \iota(h).
\]
By setting $\Phi(h) = g$, we define an $A$-homomorphism
\[
\Phi: A[[T]] \to A[[S]].
\]
\end{defn}

Let us obtain formulas for $\Phi(T^k)$.

\begin{prop}\label{prop:g_k_2}
Set $g_k(S) = \Phi(T^k)$ for $k = 0, 1, 2, \dots$.
Then we have $g_0(S) = 2$, $g_1(S) = S$, and
\[
g_k(S) = S(g_{k-1}(S) + g_{k-2}(S))
\]
for $k \geq 2$.
For any $k \geq 1$, the power series $g_k(S)$ is indeed a monic polynomial of degree $k$ with $\ord_S(g_k(S)) \geq k/2$.
\end{prop}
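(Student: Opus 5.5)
The plan is to work directly with the identity $(1+T)(1+\iota(T)) = 1$, which holds because $\iota(1+T) = (1+T)^{-1}$. Expanding it gives
\[
T + \iota(T) + T\,\iota(T) = 0,
\qquad\text{that is,}\qquad
T\,\iota(T) = -(T + \iota(T)).
\]
So $T$ and $\iota(T)$ are the two roots of $X^2 - \sigma X - \sigma$, where $\sigma := T+\iota(T)$ is the image of $S$ under the isomorphism of Lemma \ref{lem:inv_exp}.

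By definition, $\Phi(T^k)$ is the unique $g_k \in A[[S]]$ with $g_k(\sigma) = T^k + \iota(T)^k$. I will set $p_k := T^k + \iota(T)^k$ and derive a recursion for $p_k$ from Newton's identity for power sums of two elements. Multiplying $p_{k-1}$ by $T+\iota(T)$ gives
\[
(T+\iota(T))\,p_{k-1} = p_k + T\,\iota(T)\,p_{k-2}.
\]
Substituting $T\iota(T) = -\sigma$ then yields
\[
p_k = \sigma\,(p_{k-1} + p_{k-2}) \qquad (k \ge 2).
\]
The base values are $p_0 = 2$ and $p_1 = \sigma$.

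The map $A[[S]] \to A[[T]]^{\lrang{\iota}}$, $S \mapsto \sigma$, is an injective $A$-algebra homomorphism by Lemma \ref{lem:inv_exp}. The polynomials defined by $g_0 = 2$, $g_1 = S$, $g_k = S(g_{k-1}+g_{k-2})$ are sent to $p_0, p_1, p_k$ respectively, by the recursion and induction. By uniqueness they therefore equal $\Phi(T^k)$, which establishes the formulas in the statement.

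For the remaining claims, I argue by induction on $k$. For $k \ge 2$, the term $S g_{k-1}$ is monic of degree $k$ and $S g_{k-2}$ has degree at most $k-1$, so $g_k$ is a monic polynomial of degree $k$. For the order bound,
\[
\ord_S(g_k) \ge 1 + \min\bigl(\ord_S(g_{k-1}), \ord_S(g_{k-2})\bigr) \ge 1 + \frac{k-2}{2} = \frac{k}{2},
\]
using the inductive bounds $\ord_S(g_j) \ge j/2$ for $j = k-1, k-2$. This also holds for $j = 0$, since $\ord_S(g_0) = 0$; this covers the base case $k=2$, where $g_2 = S^2 + 2S$. There is no genuine obstacle; the only point needing care is noticing the relation $T\iota(T) = -\sigma$, after which everything is formal.
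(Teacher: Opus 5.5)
Your proposal is correct and follows essentially the same route as the paper: the identity $T\,\iota(T) = -(T+\iota(T))$ combined with the power-sum relation $(T+\iota(T))(T^{k-1}+\iota(T)^{k-1}) = (T^k+\iota(T)^k) + T\,\iota(T)(T^{k-2}+\iota(T)^{k-2})$ gives the recursion, and induction then gives the degree and order claims. Your explicit appeal to the injectivity of $S \mapsto T+\iota(T)$, used to identify the recursively defined polynomials with $\Phi(T^k)$, makes precise a step the paper leaves implicit.
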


\begin{proof}
By definition, $g_k(S)$ is characterized by
\[
g_k(T + \iota(T)) = T^k + \iota(T)^k.
\]
Then the formulas for $g_0$ and $g_1$ follow easily.
For each $k \geq 2$, we have
\[
(T + \iota(T)) (T^{k-1} + \iota(T)^{k-1})
= (T^k + \iota(T)^k) + T \cdot \iota(T) (T^{k-2} + \iota(T)^{k-2}).
\]
Noting that $T \cdot \iota(T) = -(T + \iota(T))$, we obtain
\[
g_1(S) g_{k-1}(S) = g_k(S) - S g_{k-2}(S),
\]
which implies the formula.
The final assertion follows inductively.
\end{proof}

Here is the list of $\Phi(T^k)$ for $k = 0, 1, 2, \dots, 7$.
\begin{align*}
\Phi(T^0) &= 2\\
\Phi(T^1) &= S\\
\Phi(T^2) &= S^2 + 2S \\
\Phi(T^3) &= S^3 + 3S^2 \\
\Phi(T^4) &= S^4 + 4S^3 + 2S^2 \\
\Phi(T^5) &= S^5 + 5S^4 + 5S^3 \\
\Phi(T^6) &= S^6 + 6S^5 + 9S^4 + 2S^3 \\
\Phi(T^7) &= S^7 + 7S^6 + 14S^5 + 7S^4
\end{align*}

By using Proposition \ref{prop:g_k_2}, we easily deduce the following.

\begin{prop}\label{prop:Phi_surj}
The restriction of the homomorphism $\Phi: A[[T]] \to A[[S]]$ gives an isomorphism $\Phi: TA[T] \overset{\simeq}{\to} S A[S]$.
\end{prop}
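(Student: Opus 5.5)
The plan is to use the explicit description $\Phi(T^k) = g_k(S)$ from Proposition \ref{prop:g_k_2}, together with a unitriangularity argument. Since $\Phi$ is an $A$-homomorphism, its restriction to $TA[T]$ is determined by the values $\Phi(T^k) = g_k(S)$ for $k \geq 1$. We must check two things: that the image lands in $SA[S]$, and that the resulting map $TA[T] \to SA[S]$ is bijective.

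\textbf{Image.} By Proposition \ref{prop:g_k_2}, each $g_k(S)$ with $k \geq 1$ is a polynomial of degree $k$, and it satisfies $\ord_S(g_k) \geq k/2 > 0$. Hence $g_k(S) \in SA[S]$. By $A$-linearity, $\Phi(TA[T]) \subset SA[S]$.

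\textbf{Bijectivity.} Both $TA[T]$ and $SA[S]$ are free $A$-modules, with bases $\{T^k\}_{k\geq 1}$ and $\{S^k\}_{k \geq 1}$ respectively. Proposition \ref{prop:g_k_2} says $g_k$ is monic of degree $k$, so
\[
g_k(S) = S^k + \sum_{j<k} c_{kj} S^j.
\]
By the previous paragraph, only indices $1 \leq j < k$ actually occur. Thus the matrix of $\Phi$ with respect to these bases is upper unitriangular.

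To conclude, filter both modules by degree: let $F_n$ be the $A$-span of $T, \dots, T^n$, and $G_n$ the $A$-span of $S, \dots, S^n$. Then $\Phi$ maps $F_n$ into $G_n$. On each finite piece $F_n \to G_n$ it is given by an $n\times n$ unitriangular matrix, whose determinant is $1$, so it is an isomorphism of free $A$-modules. The inverse can also be produced by induction on degree: subtract $a\,g_k$ to kill the leading term $aS^k$ and recurse. Taking the union over $n$ gives the isomorphism $TA[T] \simeq SA[S]$.

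The only point needing care is that $\Phi$ is $A$-linear, so that $\Phi(\sum a_k T^k) = \sum a_k g_k$ for finite sums. This holds because it is built from the $A$-linear maps $h\mapsto h+\iota(h)$ and the inverse of the isomorphism in Lemma \ref{lem:inv_exp}. There is no genuine obstacle beyond this; the content is entirely in the monicity and the vanishing constant terms from Proposition \ref{prop:g_k_2}.
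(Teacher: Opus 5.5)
Your proposal is correct and follows the same route as the paper, which simply says the result is deduced easily from Proposition~\ref{prop:g_k_2}. That is exactly your argument: each $g_k = \Phi(T^k)$ with $k \geq 1$ is a monic polynomial of degree $k$ with zero constant term, so $\Phi$ is an upper unitriangular change of basis from $\{T^k\}_{k \geq 1}$ to $\{S^k\}_{k \geq 1}$.
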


Note that, since $\Phi(1) = 2$, the restriction of $\Phi$ gives the duplication map $A \to A$ on the constant terms, which is not surjective or injective in general.

It will be also convenient to compute $\Phi((1+T)^k-1)$.
There is a natural relation between $\Phi(T^k)$ and $\Phi((1 + T)^k-1)$, but we provide an individual proposition.

\begin{prop}\label{prop:g_k}
Set $g_k(S) := \Phi((1+T)^k-1)$ for $k = 0, 1, 2, \dots$.
Then we have $g_0(S) = 0$, $g_1(S) = S$, and
\[
g_k(S) = (S + 2)g_{k-1}(S) - g_{k-2}(S) + 2S
\]
for $k \geq 2$.
For any $k \geq 1$, the power series $g_k(S)$ is indeed a monic polynomial of degree $k$ with $\ord_S(g_k(S)) \geq 1$.
\end{prop}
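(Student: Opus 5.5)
By definition, $g_k(S)$ is the unique element of $A[[S]]$ satisfying
\[
g_k(T+\iota(T)) = \bigl((1+T)^k - 1\bigr) + \bigl((1+T)^{-k} - 1\bigr),
\]
since $\iota((1+T)^k - 1) = (1+T)^{-k} - 1$. We will set $u = 1+T$, so that $\iota(u) = u^{-1}$ and $T + \iota(T) = u + u^{-1} - 2$. In this notation $g_k$ is characterized by
\[
g_k(u+u^{-1}-2) = u^k + u^{-k} - 2.
\]
Note that $S+2$ corresponds to $u + u^{-1}$. By Lemma \ref{lem:inv_exp}, the map $S \mapsto T+\iota(T)$ is injective, so any identity among the $g_k$ can be checked after substituting $S = T+\iota(T)$.

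\textbf{Initial values.} For $k=0$ the right-hand side is $0$, so $g_0 = 0$. For $k=1$ the right-hand side is $u + u^{-1} - 2 = T + \iota(T)$, so $g_1 = S$.

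\textbf{Recurrence.} For $k \ge 2$ we will use the Chebyshev-type identity
\[
(u+u^{-1})(u^{k-1}+u^{-(k-1)}) = (u^k+u^{-k}) + (u^{k-2}+u^{-(k-2)}).
\]
Writing $u^j + u^{-j} = g_j + 2$ (after substitution), this becomes
\[
(S+2)(g_{k-1}+2) = (g_k+2) + (g_{k-2}+2).
\]
Expanding gives
\[
g_k = (S+2)g_{k-1} - g_{k-2} + 2S + 4 - 2 - 2 = (S+2)g_{k-1} - g_{k-2} + 2S,
\]
which is the claimed recurrence. By injectivity, it holds in $A[[S]]$.

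\textbf{Final assertion.} We will argue by induction on $k \ge 1$ that $g_k$ is a monic polynomial of degree $k$ with zero constant term. The cases $k=1$ and $k=2$ hold directly: $g_1 = S$, and $g_2 = (S+2)S - 0 + 2S = S^2 + 4S$. For $k \ge 3$, the term $(S+2)g_{k-1}$ is monic of degree $k$, while $-g_{k-2} + 2S$ has degree at most $k-2$ (since $k-2 \ge 1$), so $g_k$ is monic of degree $k$. Each term in the recurrence has zero constant term, because $g_{k-1}$, $g_{k-2}$ and $2S$ do, so $\ord_S(g_k) \ge 1$.

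Alternatively, the vanishing of the constant term follows directly from the definition: setting $T = 0$ gives $S = 0$ and the right-hand side $0$.

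There is no real obstacle here. The only point needing care is the bookkeeping of the constants $\pm 2$ when passing between $u^j + u^{-j}$ and $g_j$.
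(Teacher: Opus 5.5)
Your proof is correct and follows essentially the paper's route: the paper proves this ``similarly to Proposition~\ref{prop:g_k_2}'', i.e.\ it multiplies $(1+T)+(1+T)^{-1} = T+\iota(T)+2$ against $(1+T)^{k-1}+(1+T)^{-(k-1)}$ and uses $(1+T)\,\iota(1+T)=1$ together with the injectivity of $S\mapsto T+\iota(T)$, which is exactly your Chebyshev-type identity with $u=1+T$. Your bookkeeping of the constants $\pm 2$ and your induction for monicity, degree $k$, and vanishing constant term are both fine.
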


\begin{proof}
This is proved similarly to Proposition \ref{prop:g_k_2}.
\end{proof}

Here is the list of $\Phi((1+T)^k-1)$ for $k = 0, 1, 2, \dots, 7$.
\begin{align*}
\Phi((1+T)^0-1) &= 0\\
\Phi((1+T)^1-1) &= S\\
\Phi((1+T)^2-1) &= S^2 + 4S\\
\Phi((1+T)^3-1) &= S^3 + 6S^2 + 9S\\
\Phi((1+T)^4-1) &= S^4 + 8S^3 + 20 S^2 + 16S\\
\Phi((1+T)^5-1) &= S^5+10S^4+35S^3+50S^2+25S\\
\Phi((1+T)^6-1) &= S^6+12S^5+54S^4+112S^3+105S^2+36S\\
\Phi((1+T)^7-1) &= S^7+14S^6+77S^5+210S^4+294S^3+196S^2+49S
\end{align*}

\begin{rem}
These polynomials (up to sign) already appear in Valli\`{e}res \cite[Lemma 5.5]{Val21}.
Moreover, McGown--Valli\`{e}res \cite[Section 2]{MV23} observe that they are essentially the Chebyshev polynomials.
In fact, in our notation, we have
\[
\Phi((1+T)^k-1) = 2 T_k(S/2 + 1) - 2,
\]
where $T_k$ denotes the $k$-th Chebyshev polynomial.
This identity follows from comparing the definition of $\Phi((1+T)^k-1)$ with $T_k((\zeta + \zeta^{-1})/2) = (\zeta^k + \zeta^{-k})/2$ for any root of unity $\zeta$.
\end{rem}

%%%%%%%%%%%%%%%%%%%%%%
\subsection{Construction}\label{ss:constr}
%%%%%%%%%%%%%%%%%%%%%%

Now we are ready to prove the main theorem.

\begin{proof}[Proof of Theorem \ref{thm:main}]
We return to the setting in Subsection \ref{ss:basic_bouquet}.
For each voltage assignment $\alpha$, we have $h_{\alpha} + \iota(h_{\alpha}) = \Phi(h_{\alpha})(T + \iota(T))$ by Definition \ref{defn:Phi}.
Then using Propositions \ref{prop:inv_g} and \ref{prop:bou_compu}, we obtain
\[
\lambda(X_{\infty}/X) = \lambda(h_{\alpha} + \iota(h_{\alpha})) - 1 = 2 \lambda_S(\Phi(h_{\alpha}))-1
\]
and
\[
\mu(X_{\infty}/X) = \mu(h_{\alpha} + \iota(h_{\alpha})) = \mu_S(\Phi(h_{\alpha})).
\]

Let $\lambda' \geq 1$ and $\mu \geq 0$ be given integers.
We shall construct a $\Z_p$-covering $X_{\infty}/X$ with $X$ a bouquet such that $\lambda(X_{\infty}/X) = 2 \lambda' - 1$ and $\mu(X_{\infty}/X) = \mu$.
By the discussion above, these are equivalent to $\lambda_S(\Phi(h_{\alpha})) = \lambda'$ and $\mu_S(\Phi(h_{\alpha})) = \mu$.

Take any polynomial $g(S) \in S\Z[S]$ satisfying $\lambda_S(g(S)) = \lambda'$ and $\mu_S(g(S)) = \mu$; for instance, $g(S) = p^{\mu} S^{\lambda'}$ suffices.
Then by Proposition \ref{prop:Phi_surj}, we find a unique polynomial $h \in T \Z[T]$ such that $\Phi(h) = g(S)$.

It remains to modify $h$ into an admissible element in the sense of Definition \ref{defn:adm}.
Condition (a) is already satisfied.
We can uniquely write in a finite sum
\[
h = \sum_{k \geq 1} c_k (1 - (1+T)^k)
= \sum_{k \geq 1} c_k (1 - \gamma^k)
\]
with $c_k \in \Z$.
We may freely change $c_k$ modulo $p^{\mu+1} \Z$ since this does not affect the first condition for $g(S)$.
Therefore, we can assume that $c_k \geq 0$, so condition (b) is satisfied.
By adding $p^{\mu+1}(1 - \gamma)$ if necessary, condition (c) can be satisfied.
This completes the proof.
\end{proof}

Let us describe the construction through concrete examples.
Recall that we listed $\Phi((1+T)^k-1)$ following Proposition \ref{prop:g_k}.
We need to compute the inverse of the corresponding matrix representation:
\[
\begin{pmatrix}
1 & 4 & 9 & 16 & 25 & 36 & 49 & \cdots \\
  & 1 & 6 & 20 & 50 & 105 & 196 & \cdots \\
  &   & 1 & 8 & 35 & 112 & 294 & \cdots \\
  &   &   & 1 & 10 & 54 & 210 & \cdots \\
  &   &   &   & 1 & 12 & 77 & \cdots \\
  &   &   &   &   & 1 & 14 & \cdots \\
  &   &   &   &   &   & 1 & \cdots \\
  &   &   &   &   &   &   & \ddots
\end{pmatrix}^{-1}
=
\begin{pmatrix}
1 & -4 & 15 & -56 & 210 & -792 & 3003 & \cdots \\
  & 1 & -6 & 28 & -120 & 495 & -2002 & \cdots \\
  &   & 1 & -8 & 45 & -220 & 1001 & \cdots \\
  &   &   & 1 & -10 & 66 & -364 & \cdots \\
  &   &   &   & 1 & -12 & 91 & \cdots \\
  &   &   &   &   & 1 & -14 & \cdots \\
  &   &   &   &   &   & 1 & \cdots \\
  &   &   &   &   &   &   & \ddots
\end{pmatrix}.
\]
This should be read as the following table.

\begin{table}[ht]
\centering
\begin{tabular}{c | c}
$\Phi(h)$ & $h$ \\
\hline
$S$ & $(\gamma - 1)$ \\
$S^2$ & $(\gamma^2 - 1) - 4(\gamma - 1)$ \\
$S^3$ & $(\gamma^3 - 1) - 6(\gamma^2 - 1) + 15(\gamma - 1)$ \\
$\vdots$ & $\vdots$
\end{tabular}
\end{table}

We first assume $\mu = 0$ and show how to construct examples for $\lambda' = 1, 2$ and $3$ (so $\lambda(X_{\infty}/X) = 1, 3$ and $5$).

\begin{itemize}
\item
Let $\lambda' = 1$.
We take $g(S) = S$.
Then $g(S) = \Phi(h)$ with 
\[
h = \gamma - 1 \equiv (p-1)(1 - \gamma) \mod p.
\]
Therefore, we may take $t = p-1$ and $\alpha(e_1) = \dots = \alpha(e_{p-1}) = \gamma$.
Alternatively, we may first take $g(S) = -S$, in which case $h = 1 - \gamma$, so $t = 1$ and $\alpha(e_1) = \gamma$ suffices (see Figure \ref{fig:case_p3l12}).
This kind of refinements are often possible.

%p=3, lambda = 1, mu = 0
\begin{figure}[H]
  \centering
%\Large
\[
\xymatrix{
%0-th
\begin{tikzpicture}[baseline]
\node[circle,fill,inner sep=1pt] (a) {};
\foreach \len in {30}
\path (a) edge[loop above, looseness=\len, out=60, in=120] node[above] {\normalsize $\gamma$}(a);
\end{tikzpicture}
&
%1-st
\begin{tikzpicture}[baseline]
	\node[minimum size=4em,regular polygon,regular polygon sides=3] (a) {};
	\foreach \x in {1, 2, ..., 3}
	\fill (a.corner \x) circle[radius=1pt];
	\foreach \x/\y in {1/2, 2/3, 3/1}
	\foreach \bend in {0}
		\path (a.corner \x) edge [bend right=\bend] (a.corner \y);
\end{tikzpicture}
\ar[l] &
%2-nd
\begin{tikzpicture}[baseline]
	\node[minimum size=4em,regular polygon,regular polygon sides=9] (a) {};
	\foreach \x in {1, 2, ..., 9}
	\fill (a.corner \x) circle[radius=1pt];
	\foreach \x/\y in {1/2, 2/3, 3/4, 4/5, 5/6, 6/7, 7/8, 8/9, 9/1}
	\foreach \bend in {0}
		\path (a.corner \x) edge [bend right=\bend]  (a.corner \y);
\end{tikzpicture}
\ar[l] & \cdots \ar[l]}
\]
  \caption{($p = 3$) $\lambda = 1$, $\mu = 0$}
    \label{fig:case_p3l12}
\end{figure}

\item
Let $\lambda' = 2$.
We take $g(S) = S^2$.
Then 
\[
h = (\gamma^2 - 1) - 4(\gamma-1)
\equiv (p-1) (1 - \gamma^2) +  4 (1 - \gamma) \mod p.
\]
So we may take $t = p+3$ and
\[
\alpha(e_1) = \dots = \alpha(e_{p-1}) = \gamma^2, \alpha(e_p) = \dots = \alpha(e_{p+3}) = \gamma.
\]
If $p = 2$, since $h \equiv (1-\gamma^2) + 2(1-\gamma)$, we may take $t = 3$ and $\alpha(e_1) = \gamma^2, \alpha(e_2) = \alpha(e_3) = \gamma$ (see Figure \ref{fig:case_p2l3}).
If $p = 3$, since $h \equiv 2 (1 - \gamma^2) +  (1 - \gamma)$, we may take $t = 3$ and $\alpha(e_1) = \alpha(e_2) = \gamma^2, \alpha(e_3) = \gamma$ (see Figure \ref{fig:case_p3l3}).

%p=2, lambda = 3, mu=0
\begin{figure}[H]
  \centering
%\Large
\[
\xymatrix{
%0-th
\begin{tikzpicture}[baseline]
\node[circle,fill,inner sep=1pt] (a) {};
\foreach \len in {15,30}
\path (a) edge[loop above, looseness=\len, out=60, in=120] (a);
\path (a) edge[loop above, looseness=45, out=60, in=120] node[above] {\normalsize $\gamma^2, \gamma, \gamma$} (a);
\end{tikzpicture}
&
%1-st
\begin{tikzpicture}[baseline]
\node[circle,fill,inner sep=1pt] (a) at (0,0.5) {};
\node[circle,fill,inner sep=1pt] (b) at (0,-0.5) {};
\foreach \bend in {15, 5, -5,-15}
\path (a) edge[draw,bend right=\bend] (b);
\path (a) edge[loop left, looseness=15, out=120, in=60] (a);
\path (b) edge[loop left, looseness=15, out=-120, in=-60] (b);
\end{tikzpicture}
\ar[l] &
%2-nd
\begin{tikzpicture}[baseline]
	\node[minimum size=4em,regular polygon,regular polygon sides=4] (a) {};
	\foreach \x in {1, 2, ..., 4}
	\fill (a.corner \x) circle[radius=1pt];
	\foreach \x/\y in {1/2, 2/3, 3/4, 4/1}
	\foreach \bend in {10, -10}
		\path (a.corner \x) edge [bend right=\bend] (a.corner \y);
	\foreach \x/\y in {1/3, 2/4, 3/1, 4/2}
	\foreach \bend in {10}
		\path (a.corner \x) edge [bend right=\bend] (a.corner \y);
\end{tikzpicture}
\ar[l] &
%3-rd
\begin{tikzpicture}[baseline]
	\node[minimum size=4em,regular polygon,regular polygon sides=8] (a) {};
	\foreach \x in {1, 2, ..., 8}
	\fill (a.corner \x) circle[radius=1pt];
	\foreach \x/\y in {1/2, 2/3, 3/4, 4/5, 5/6, 6/7, 7/8, 8/1}
	\foreach \bend in {10, -10}
		\path (a.corner \x) edge [bend right=\bend] (a.corner \y);
	\foreach \x/\y in {1/3, 2/4, 3/5, 4/6, 5/7, 6/8, 7/1, 8/2}
	\foreach \bend in {0}
		\path (a.corner \x) edge [bend right=\bend] (a.corner \y);
\end{tikzpicture}
\ar[l] & \cdots \ar[l]}
\]
  \caption{($p = 2$) $\lambda = 3$, $\mu = 0$}
  \label{fig:case_p2l3}
\end{figure}

%p=3, lambda = 3, mu=0
\begin{figure}[H]
  \centering
%\Large
\[
\xymatrix{
%0-th
\begin{tikzpicture}[baseline]
\node[circle,fill,inner sep=1pt] (a) {};
\foreach \len in {15,30}
\path (a) edge[loop above, looseness=\len, out=60, in=120] (a);
\path (a) edge[loop above, looseness=45, out=60, in=120] node[above] {\normalsize $\gamma^2, \gamma^2, \gamma$} (a);
\end{tikzpicture}
&
%1-st
\begin{tikzpicture}[baseline]
	\node[minimum size=4em,regular polygon,regular polygon sides=3] (a) {};
	\foreach \x in {1, 2, ..., 3}
	\fill (a.corner \x) circle[radius=1pt];
	\foreach \x/\y in {1/2, 2/3, 3/1}
	\foreach \bend in {15, 0, -15}
		\path (a.corner \x) edge [bend right=\bend] (a.corner \y);
\end{tikzpicture}
\ar[l] &
%2-nd
\begin{tikzpicture}[baseline]
	\node[minimum size=4em,regular polygon,regular polygon sides=9] (a) {};
	\foreach \x in {1, 2, ..., 9}
	\fill (a.corner \x) circle[radius=1pt];
	\foreach \x/\y in {1/2, 2/3, 3/4, 4/5, 5/6, 6/7, 7/8, 8/9, 9/1}
	\foreach \bend in {0}
		\path (a.corner \x) edge [bend right=\bend] (a.corner \y);
	\foreach \x/\y in {1/3, 2/4, 3/5, 4/6, 5/7, 6/8, 7/9, 8/1, 9/2}
	\foreach \bend in {10, -10}
		\path (a.corner \x) edge [bend right=\bend] (a.corner \y);
\end{tikzpicture}
\ar[l] & \cdots \ar[l]}
\]
  \caption{($p = 3$) $\lambda = 3$, $\mu = 0$}
    \label{fig:case_p3l3}
\end{figure}

\item
Let $\lambda' = 3$.
We take $g(S) = S^3$.
Then 
\[
h = (\gamma^3-1) - 6(\gamma^2-1) +15 (\gamma -1).
\]
We have to modify this to make it admissible; a uniform choice is
\[
h \equiv (p-1) (1 - \gamma^3) + 6(1 - \gamma^2) + 15 (p-1) (1 - \gamma) \mod p.
\]
For specific values of $p$, we can reduce the number of edges.
For example, if $p = 2$, since $h \equiv (1 - \gamma^3) + (1 - \gamma)$, we may take $t = 2$ and $\alpha(e_1) = \gamma^3, \alpha(e_2) = \gamma$ (see Figure \ref{fig:case_p2l5}).
If $p = 3$, since $h \equiv (1 - \gamma^3) + 3 (1 - \gamma)$, we may take $t = 4$ and $\alpha(e_1) = \gamma^3, \alpha(e_2) = \alpha(e_3) = \alpha(e_4) = \gamma$ (see Figure \ref{fig:case_p3l5}).

%p=2, lambda = 3, mu=0
\begin{figure}[H]
  \centering
%\Large
\[
\xymatrix{
%0-th
\begin{tikzpicture}[baseline]
\node[circle,fill,inner sep=1pt] (a) {};
\foreach \len in {15}
\path (a) edge[loop above, looseness=\len, out=60, in=120] (a);
\path (a) edge[loop above, looseness=30, out=60, in=120] node[above] {\normalsize $\gamma^3, \gamma$} (a);
\end{tikzpicture}
&
%1-st
\begin{tikzpicture}[baseline]
\node[circle,fill,inner sep=1pt] (a) at (0,0.5) {};
\node[circle,fill,inner sep=1pt] (b) at (0,-0.5) {};
\foreach \bend in {15, 5, -5,-15}
\path (a) edge[draw,bend right=\bend] (b);
\end{tikzpicture}
\ar[l] &
%2-nd
\begin{tikzpicture}[baseline]
	\node[minimum size=4em,regular polygon,regular polygon sides=4] (a) {};
	\foreach \x in {1, 2, ..., 4}
	\fill (a.corner \x) circle[radius=1pt];
	\foreach \x/\y in {1/2, 2/3, 3/4, 4/1}
	\foreach \bend in {10, -10}
		\path (a.corner \x) edge [bend right=\bend] (a.corner \y);
\end{tikzpicture}
\ar[l] &
%3-rd
\begin{tikzpicture}[baseline]
	\node[minimum size=4em,regular polygon,regular polygon sides=8] (a) {};
	\foreach \x in {1, 2, ..., 8}
	\fill (a.corner \x) circle[radius=1pt];
	\foreach \x/\y in {1/2, 2/3, 3/4, 4/5, 5/6, 6/7, 7/8, 8/1}
	\foreach \bend in {0}
		\path (a.corner \x) edge [bend right=\bend] (a.corner \y);
	\foreach \x/\y in {1/4, 2/5, 3/6, 4/7, 5/8, 6/1, 7/2, 8/3}
	\foreach \bend in {0}
		\path (a.corner \x) edge [bend right=\bend] (a.corner \y);
\end{tikzpicture}
\ar[l] & \cdots \ar[l]}
\]
  \caption{($p = 2$) $\lambda = 5$, $\mu = 0$}
  \label{fig:case_p2l5}
\end{figure}

%p=3, lambda = 5, mu=0
\begin{figure}[H]
  \centering
%\Large
\[
\xymatrix{
%0-th
\begin{tikzpicture}[baseline]
\node[circle,fill,inner sep=1pt] (a) {};
\foreach \len in {15,30,45}
\path (a) edge[loop above, looseness=\len, out=60, in=120] (a);
\path (a) edge[loop above, looseness=60, out=60, in=120] node[above] {\normalsize $\gamma^3, \gamma, \gamma, \gamma$} (a);
\end{tikzpicture}
&
%1-st
\begin{tikzpicture}[baseline]
	\node[minimum size=4em,regular polygon,regular polygon sides=3] (a) {};
	\node[circle,fill,inner sep=1pt] (a1) at (a.corner 1) {};
	\node[circle,fill,inner sep=1pt] (a2) at (a.corner 2) {};
	\node[circle,fill,inner sep=1pt] (a3) at (a.corner 3) {};
\path (a1) edge[loop, looseness=30, out=60, in=120] (a1);
\path (a2) edge[loop, looseness=30, out=180, in=240] (a2);
\path (a3) edge[loop, looseness=30, out=300, in=0] (a3);
\path (0,0) edge[loop, looseness=10, out=45, in=135] (0,0);
	\foreach \x in {1, 2, ..., 3}
	\fill (a.corner \x) circle[radius=1pt];
	\foreach \x/\y in {1/2, 2/3, 3/1}
	\foreach \bend in {15, 0, -15}
		\path (a.corner \x) edge [bend right=\bend] (a.corner \y);
\end{tikzpicture}
\ar[l] &
%2-nd
\begin{tikzpicture}[baseline]
	\node[minimum size=4em,regular polygon,regular polygon sides=9] (a) {};
	\foreach \x in {1, 2, ..., 9}
	\fill (a.corner \x) circle[radius=1pt];
	\foreach \x/\y in {1/2, 2/3, 3/4, 4/5, 5/6, 6/7, 7/8, 8/9, 9/1}
	\foreach \bend in {15,0,-15}
		\path (a.corner \x) edge [bend right=\bend] (a.corner \y);
	\foreach \x/\y in {1/4, 2/5, 3/6, 4/7, 5/8, 6/9, 7/1, 8/2, 9/3}
	\foreach \bend in {0}
		\path (a.corner \x) edge [bend right=\bend] (a.corner \y);
\end{tikzpicture}
\ar[l] & \cdots \ar[l]}
\]
  \caption{($p = 3$) $\lambda = 5$, $\mu = 0$}
    \label{fig:case_p3l5}
\end{figure}
\end{itemize}

Table \ref{tab:unram} lists the values of $\ord_p(\kappa_{X_n})$ for these examples, computed using SageMath.

\begin{table}[htbp]
\centering
\begin{tabular}{c @{\hspace{2em}} cccccccc}
\toprule
$n$ & \swid{$0$} & \swid{$1$} & \swid{$2$} & \swid{$3$} & \swid{$4$} & \swid{$5$} & \swid{$6$} & $\cdots$ \\
\midrule
Figure \ref{fig:case_p3l12} & $0$ & $1$ & $2$ & $3$ & $4$ & $5$ & $6$ & $\cdots$ \\
Figure \ref{fig:case_p2l3} & $0$ & $2$ & $7$ & $10$ & $13$ & $16$ & $19$ & $\cdots$ \\
Figure \ref{fig:case_p3l3} & $0$ & $3$ & $6$ & $9$ & $12$ & $15$ & $18$ & $\cdots$ \\
Figure \ref{fig:case_p2l5} & $0$ & $2$ & $5$ & $12$ & $17$ & $22$ & $27$ & $\cdots$ \\
Figure \ref{fig:case_p3l5} & $0$ & $3$ & $8$ & $13$ & $18$ & $23$ & $28$ & $\cdots$ \\
\bottomrule
\end{tabular}
\caption{The values of $\ord_p(\kappa_{X_n})$}
\label{tab:unram}
\end{table}

When $\mu \geq 1$, we only need to multiply these polynomials $g$ and $h$ by $p^{\mu}$.
Geometrically, this means multiplying the number of edges by $p^{\mu}$.

\begin{rem}
We briefly discuss graphs $X$ that are not bouquets.
We fix $n = \# V_X$, the number of vertices of $X$.
Then naturally we should deal with matrices $h \in M_n(\Z[\Gamma])$ satisfying a certain admissibility condition, and study the $\lambda$- and $\mu$-invariants of the determinant $\det(h + \iota(h)) \in \Z[\Gamma]$.
The difficulty is that the map
\[
M_n(\Z[\Gamma]) \to \Z[\Gamma], 
\quad
h \mapsto \det(h + \iota(h))
\]
is not a $\Z$-homomorphism, so its analysis becomes more complicated.
\end{rem}

%%%%%%%%%%%%%%%%%%%%%%
\section{Remarks}\label{sec:remark}
%%%%%%%%%%%%%%%%%%%%%%

We still focus on unramified $\Z_p$-coverings of bouquets $X$.

%%%%%%%%%%%%%%%%%%%%%%
\subsection{Probability}\label{ss:probab}
%%%%%%%%%%%%%%%%%%%%%%

Let us discuss the probabilities of prescribed pairs $(\lambda, \mu)$ as the $\Z_p$-covering $X_{\infty}/X$ varies.
For this purpose, we first fix the meaning of probability.
Our probability space is different from and easier than that of \cite{DLRV24}.

As discussed in Section \ref{sec:bouquets}, we are studying the pair $(2 \lambda_S(\Phi(h)) - 1, \mu_S(\Phi(h)))$, where $h \in \Z[\Gamma]$ is admissible in the sense of Definition \ref{defn:adm}.
However, to discuss the probability, we ignore conditions (b)(c) for the admissibility, and moreover allow $h$ to be in $\Z_p[[\Gamma]]$ instead of $\Z[\Gamma]$.
Namely, for each element $h$ of
\[
\Z_p[[\Gamma]]^{\aug = 0}
= \{h \in \Z_p[[\Gamma]] \mid \aug(h) = 0\},
\]
we consider as if there is an associated $\Z_p$-covering $X_{\infty}/X$, and its $\lambda$- and $\mu$-invariants are defined by the previous formulas
\[
\lambda(X_{\infty}/X) = 2 \lambda_S(\Phi(h))-1,
\quad
\mu(X_{\infty}/X) = \mu_S(\Phi(h)).
\]
Since $\Z_p[[\Gamma]]^{\aug = 0} \simeq T \Z_p[[T]]$ is a compact additive group, we can use the Haar measure to define the probability.

Let us justify this idea.
Firstly, the set of admissible elements is a dense subset of $\Z_p[[\Gamma]]^{\aug = 0}$.
Secondly, as discussed in Subsection \ref{ss:constr}, the Iwasawa invariants are stable under $p$-adically small change of the voltage assignments.
(Alternatively and more naturally, one can instead consider edge-weighted graphs with weights in $\Z_p$ in the sense of Adachi--Mizuno--Tateno \cite{AMT25}.)
Therefore, taking the topological closure sounds reasonable.

Now we prove the following.

\begin{prop}
We consider $X_{\infty}/X$ associated to each $h \in \Z_p[[\Gamma]]^{\aug=0}$.
Then we have $\mu(X_{\infty}/X) = 0$ almost everywhere and the probability of $\lambda(X_{\infty}/X) = 2\lambda'-1$ is $\cfrac{1}{p^{\lambda'-1}} \left(1 - \cfrac{1}{p} \right)$.
\end{prop}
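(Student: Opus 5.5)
The plan is to transport the Haar measure on $\Z_p[[\Gamma]]^{\aug=0} \simeq T\Z_p[[T]]$ to $S\Z_p[[S]]$ via $\Phi$, and then compute the distribution of $(\lambda_S,\mu_S)$ on $S\Z_p[[S]]$, where it is elementary.

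The first step is to show that $\Phi$ restricts to a topological isomorphism of compact additive groups $T\Z_p[[T]] \overset{\simeq}{\to} S\Z_p[[S]]$. By Proposition \ref{prop:g_k_2}, $\Phi(T^k) = g_k(S)$ is a monic polynomial of degree $k$ with $\ord_S(g_k) \geq k/2$. This lower bound on the order makes $\Phi$ continuous for the $(p,T)$-adic and $(p,S)$-adic topologies: $\Phi$ sends $T^N\Z_p[[T]]$ into $S^{\lceil N/2\rceil}\Z_p[[S]]$, so $\Phi(\sum a_k T^k) = \sum a_k g_k(S)$ converges. By Proposition \ref{prop:Phi_surj}, $\Phi$ is bijective on polynomials $T\Z_p[T] \to S\Z_p[S]$. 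I will upgrade this to power series using the mod-$p$ structure, arguing level by level. Modulo $(p^m, S^N)$, the image of $T^k$ for $k \geq 2N$ vanishes. So $\Phi$ induces a surjection from the polynomials of degree $<2N$ onto $S\Z/p^m[S]/(S^N)$, by the polynomial isomorphism. Surjectivity on each finite quotient, combined with compactness, gives surjectivity of $\Phi$. Injectivity holds because $\Phi(h)$ determines $h+\iota(h)$, and $h \mapsto h+\iota(h)$ is injective on $T\Z_p[[T]]$. To see the latter, suppose $h+\iota(h)=0$. Writing $h = a_nT^n+\cdots$, the coefficient of $T^n$ in $h+\iota(h)$ is $(1+(-1)^n)a_n$, which forces $n$ odd. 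The next coefficient gives a relation as in Lemma \ref{lem:inv_even}: $a_{n+1}+ (na_n + a_{n+1})(-1)^{n+1} = 0$, i.e.\ $na_n=0$ up to sign. Since $\Z_p$ is a domain, a finer argument (trying the coefficient comparison directly, or alternatively noting $\Phi$ is an injective $\Z_p$-linear map of free modules whose matrix is triangular with unit diagonal after reordering) should finish it. A continuous bijective homomorphism of compact groups is an isomorphism, and it carries Haar measure to Haar measure by uniqueness of normalized Haar measure.

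It then remains to compute the Haar measure of $\{g \in S\Z_p[[S]] : \mu_S(g)=\mu,\ \lambda_S(g)=\lambda'\}$. Write $g = \sum_{k\ge1} b_k S^k$ with the $b_k$ i.i.d.\ Haar-distributed on $\Z_p$. The event $\mu_S(g) \geq 1$ is $\{p \mid b_k \text{ for all } k\}$, which has measure $\prod_k p^{-1} = 0$; hence $\mu = 0$ almost everywhere. On the event $\mu = 0$, the condition $\lambda_S(g) = \lambda'$ means $p \mid b_1,\dots,b_{\lambda'-1}$ and $p \nmid b_{\lambda'}$. This has probability $p^{-(\lambda'-1)}(1-1/p)$, and the formula $\lambda = 2\lambda'-1$ from Subsection \ref{ss:constr} translates this into the claim.

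The main obstacle is the first step, specifically the clean justification that $\Phi$ is a measure-preserving bijection between $T\Z_p[[T]]$ and $S\Z_p[[S]]$. I expect the finite-level surjectivity argument together with compactness to be the most efficient route. An alternative is to simply note that the pushforward is a translation-invariant probability measure, which holds because $\Phi$ is a continuous surjective homomorphism; that already suffices even without injectivity.
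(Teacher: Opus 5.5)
Your injectivity claim is false, so the ``topological isomorphism'' step fails as written. The map $h \mapsto h+\iota(h)$ kills every $\iota$-anti-invariant element, and such elements exist in $T\Z_p[[T]]$. For example, $h = T-\iota(T) = 2T - T^2 + T^3 - \cdots$ (that is, $h=\gamma-\gamma^{-1}$) satisfies $\aug(h)=0$ and $\Phi(h) = \Phi(T)-\Phi(\iota(T)) = S - S = 0$.

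Your coefficient comparison breaks at exactly this point. For $n$ odd, the $T^{n+1}$-coefficient of $h+\iota(h)$ is $2a_{n+1}+na_n$, which does not force $na_n=0$. The relation $na_n=0$ comes from $\iota(f)=f$ in Lemma \ref{lem:inv_even}, not from $\iota(h)=-h$. The ``triangular'' heuristic also fails: the $S^j$-coefficient of $\Phi(\sum a_kT^k)$ involves $a_j,\dots,a_{2j}$. This gives bijectivity only on polynomials, as in Proposition \ref{prop:Phi_surj}, not on power series.

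The proof is nevertheless rescued by the fallback in your last sentence, and that should be the main line. Your continuity and surjectivity arguments are correct (finite-level surjectivity plus compactness). The pushforward of normalized Haar measure under a continuous surjective homomorphism of compact groups is a translation-invariant probability measure, hence is Haar measure on $S\Z_p[[S]]$. Your i.i.d.\ coefficient computation then finishes the proof.

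The paper's argument is leaner. It notes that the event ``$\lambda_S(\Phi(h))\ge\lambda'$ or $\mu_S(\Phi(h))\ge 1$'' is exactly the kernel of the composite $T\Z_p[[T]]\to S\F_p[[S]]/S^{\lambda'}\F_p[[S]]$. That composite is surjective already on polynomials by Proposition \ref{prop:Phi_surj}, so the kernel is an open subgroup of index $p^{\lambda'-1}$ and has measure $p^{-(\lambda'-1)}$. Taking differences gives the probability, and summing over $\lambda'$ gives $\mu=0$ almost everywhere. This avoids the density/compactness argument and the uniqueness of Haar measure altogether. Your route yields more, namely that $\Phi(h)$ is itself Haar-distributed, so the full joint law of $(\lambda_S,\mu_S)$ follows. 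The price is that extra measure-theoretic step.
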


\begin{proof}
We have $\lambda_S(\Phi(h)) \geq \lambda'$ or $\mu_S(\Phi(h)) \geq 1$ if and only if $\Phi(h) \in (p, S^{\lambda'}) \Z_p[[S]]$.
This is then equivalent to that $h$ is in the kernel of the homomorphism
\[
T\Z_p[[T]] \overset{\Phi}{\to} S\Z_p[[S]] \twoheadrightarrow S\F_p[[S]]/S^{\lambda'} \F_p[[S]],
\]
where the final map is the natural projection.
The probability of this event is $1/p^{\lambda'-1}$, since the cardinality of $S\F_p[[S]]/S^{\lambda'} \F_p[[S]]$ is $p^{\lambda'-1}$, and the displayed composite homomorphism is surjective by Proposition \ref{prop:Phi_surj}.
Therefore, the probability of $\lambda_S(\Phi(h)) = \lambda'$ and $\mu_S(\Phi(h)) = 0$ is
\[
\cfrac{1}{p^{\lambda'-1}} - \cfrac{1}{p^{\lambda'}}
= \cfrac{1}{p^{\lambda'-1}} \left(1 - \cfrac{1}{p} \right).
\]
Since $\displaystyle \sum_{\lambda'=1}^{\infty} \cfrac{1}{p^{\lambda'-1}} \left(1 - \cfrac{1}{p} \right) = 1$, we have $\mu = 0$ almost everywhere.
\end{proof}

As demonstrated in Subsection \ref{ss:constr}, we have an algorithm to determine the set of $h \in \Z_p[[\Gamma]]^{\aug=0}$ for which we have $\lambda(X_{\infty}/X) = 2\lambda'-1$ and $\mu(X_{\infty}/X) = 0$.
In this proposition, such a computation is unnecessary, since the surjectivity of $\Phi$ alone suffices.

%%%%%%%%%%%%%%%%%%%%%%
\subsection{Non-$p$-components}\label{ss:non-p}
%%%%%%%%%%%%%%%%%%%%%%

Let $X_{\infty}/X$ be an unramified $\Z_p$-covering of connected graphs.
Let $l \neq p$ be a prime number.
Then Lei--Valli\`{e}res \cite{LV23} showed that there exist a non-negative integer $\mu_l = \mu_l(X_{\infty}/X)$ and an integer $\nu_l$ such that
\[
\ord_l(\kappa_{X_n}) = \mu_l l^n + \nu_l
\]
holds for $n \gg 0$.
Note that $\lambda_l$ does not appear.
For clarification, we write $\lambda_p = \lambda_p(X_{\infty}/X)$ and $\mu_p = \mu_p(X_{\infty}/X)$ for those describing the growth of $\ord_p(\kappa_{X_n})$.

This $\mu_l$ is computed in the same way as $\mu_p$, as follows.
As in Subsection \ref{ss:Iw_graph}, we suppose that $X_{\infty}/X$ is obtained as the derived graphs of a voltage assignment $\alpha: \bE_X \to \Gamma$.
Then we have the element $Z_{\alpha} \in \Z[\Gamma]$ and $\mu_l$ is the largest integer such that
\[
Z_{\alpha} \in l^{\mu_l} \Z[\Gamma].
\]
Note that this is equivalent to $Z_{\alpha} \in l^{\mu_l} \Z_l[\Gamma]$ and that Lemma \ref{lem:divis1} (for $A = \Z$ or $\Z_l$) enables us to reformulate these conditions.

For each $X_{\infty}/X$, we have $\mu_l(X_{\infty}/X) = 0$ for almost all $l$.
This is simply because any integer divisible by infinitely many prime numbers must be $0$.
Let us show that this is the only additional restriction, improving Theorem \ref{thm:main}.

\begin{thm}\label{thm:main_l}
Given an odd integer $\lambda_p \geq 1$ and an integer $\mu_l \geq 0$ for each prime number $l$ (including $p$), as long as $\mu_l = 0$ holds for almost all $l$, there exists an unramified $\Z_p$-covering $X_{\infty}/X$ of connected graphs such that $\lambda_p(X_{\infty}/X) = \lambda_p$ and $\mu_l(X_{\infty}/X) = \mu_l$ holds for each prime $l$.
Indeed, we construct such a covering with $X$ a bouquet.
\end{thm}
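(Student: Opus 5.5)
Let $\lambda_p = 2\lambda' - 1$ and let $N = \prod_{l \ne p} l^{\mu_l}$, a positive integer coprime to $p$. The plan is to run the construction of Subsection \ref{ss:constr} with target $N p^{\mu_p} S^{\lambda'}$ and then fix admissibility by congruences modulo $p^{\mu_p+1}N$.

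For a bouquet, $Z_\alpha = h_\alpha + \iota(h_\alpha) = \Phi(h_\alpha)(T+\iota(T))$. Let $M = \prod_l l^{\mu_l}$. For every prime $l$, $\mu_l(X_\infty/X)$ is the largest $m$ with $Z_\alpha \in l^m \Z[\Gamma]$. The first task is to translate this divisibility into a condition on $\Phi(h_\alpha) \in S\Z[S]$, valid for every $l$ and not just $l=p$.

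By Lemma \ref{lem:divis1}(2) applied with $A = \Z$, the condition $Z_\alpha \in l^m\Z[\Gamma]$ is equivalent to $Z_\alpha \in l^m \Z[T]$. By Lemma \ref{lem:divis1}(1), this is equivalent to $Z_\alpha \in l^m\Z[[T]]$. Now apply Lemma \ref{lem:inv_exp} to the ring $A = \Z/l^m\Z$: the map $A[[S]] \to A[[T]]^{\lrang{\iota}}$ is injective. Hence $Z_\alpha \equiv 0 \bmod l^m$ if and only if $\Phi(h_\alpha) \equiv 0 \bmod l^m$ in $\Z[[S]]$, which holds if and only if $\Phi(h_\alpha) \in l^m \Z[S]$. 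The conclusion is that $\mu_l(X_\infty/X)$ is the $l$-adic content of the polynomial $\Phi(h_\alpha)$. Together with Proposition \ref{prop:inv_g}, which gives $\lambda_p = 2\lambda_S(\Phi(h_\alpha)) - 1$, this reduces the theorem to finding an admissible $h$ with the following two properties:
\begin{itemize}
\item[(i)] $\lambda_S(\Phi(h)) = \lambda'$;
\item[(ii)] the $l$-adic content of $\Phi(h)$ equals $l^{\mu_l}$ for every prime $l$.
\end{itemize}

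Take $g = M S^{\lambda'} \in S\Z[S]$. It satisfies both (i) and (ii). By Proposition \ref{prop:Phi_surj} there is $h_0 \in T\Z[T]$ with $\Phi(h_0) = g$. Since $\Phi$ is an isomorphism $T\Z[T] \to S\Z[S]$ and $g \in M\, S\Z[S]$, we get $h_0 = M h_1$ with $h_1 \in T\Z[T]$. Write
\[
h_1 = \sum_{k\ge1} c_k(1-\gamma^k), \qquad c_k \in \Z .
\]
Now set $Q = \prod_{\mu_l>0} l$, which includes $p$ itself whether or not $\mu_p > 0$. Replace each $c_k$ by some $c_k' \equiv c_k \bmod Q$ with $c_k' \ge 0$. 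Also add $Q(1-\gamma)$ if needed, so that $h = M\sum_k c_k'(1-\gamma^k)$ is not in $\Z[\Gamma^p]$. This makes $h$ admissible: the coefficients $-Mc_k'$ are non-positive, and $M c_1' \not\equiv 0$ in a way that leaves the $\gamma$-coefficient nonzero.

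We have $\Phi(h) = M\bigl(S^{\lambda'} + Q\,\Phi(u)\bigr)$ with $u \in T\Z[T]$, so $\Phi(u) \in S\Z[S]$. It remains to check (i) and (ii).
\begin{itemize}
\item For each $l$ dividing $Q$: modulo $l$, the bracket is congruent to $S^{\lambda'}$, which is a monic, primitive polynomial. So the $l$-content of $\Phi(h)$ is exactly $l^{\mu_l}$. For $l = p$ this also gives $\lambda_S = \lambda'$, which is (i).
\item For primes $l$ with $\mu_l = 0$ and $l \nmid Q$: we need the bracket $S^{\lambda'} + Q\Phi(u)$ to be primitive at $l$.
\end{itemize}

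The second case is the main obstacle. There are infinitely many such $l$, and primitivity of an integer polynomial means that the gcd of its coefficients is $1$. The plan is to force the bracket to have an integer coefficient equal to $\pm1$, which makes it primitive at every prime at once. To arrange this, choose $c_k'$ so that the top coefficient is controlled. Concretely, pick the degree $d$ of $h_1$, or enlarge it, and adjust $c_K'$ for some large $K$ by adding a multiple $Q\,r$ with $r \ge 0$. Since $\Phi((1+T)^K - 1)$ is monic of degree $K$, the leading coefficient of $\Phi(h)/M$ becomes $-Qr$. That is not $\pm1$, so instead use a simpler fix: add $Q^{j}$-multiples only, and note that any prime dividing every coefficient of the bracket must divide its $S^{\lambda'}$-coefficient $1 + Q\cdot(\text{integer})$.

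Under this reasoning the set of bad primes is finite and prime to $Q$. Those finitely many primes can then be absorbed into a modified $Q$ and the adjustment repeated. Equivalently, in the congruence step one would choose $c_k'$ modulo $Q$ so that the $S^{\lambda'}$-coefficient $1 + Q(\cdots)$ equals exactly $1$. This is achievable because adjusting $c_k'$ by multiples of $Q$ for $k > \lambda'$ does not change coefficients in degrees at most $\lambda'$, after a triangular correction using the upper-unitriangular matrix of Proposition \ref{prop:g_k}. Checking that this correction can be done while keeping all $c_k' \ge 0$ is the delicate point.
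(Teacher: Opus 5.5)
Your reduction of $\mu_l(X_\infty/X)$ to the $l$-adic content of $\Phi(h_\alpha)$ is fine. It is what the paper means when it says that $\Phi$ preserves the $\mu_l$-invariants. The argument is not finished, however: the step you yourself flag as delicate is a genuine gap. After you replace the $c_k$ by non-negative $c_k'\equiv c_k \bmod Q$, the bracket $B=\sum_k c_k'(1-\gamma^k)$ may acquire a common prime factor $q\nmid Q$, and neither of your remedies removes it.

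\begin{itemize}
\item Enlarging $Q$ by the bad primes and redoing the congruence step can create new bad primes, and you give no argument that this process stops.
\item Making the $S^{\lambda'}$-coefficient exactly $1$ is impossible. By the Chebyshev description, and since all roots of $T_k$ lie in $(-1,1)$, every coefficient of $\Phi((1+T)^k-1)$ in degrees $1,\dots,k$ is positive. Hence $\Phi(B)=-\sum_k c_k'\,\Phi((1+T)^k-1)$ has only non-positive coefficients when all $c_k'\ge 0$.
\item Your supporting claim is also false: changing $c_k$ for $k>\lambda'$ does change the low-degree coefficients. For example, the $S$-coefficient of $\Phi((1+T)^k-1)$ is $k^2$.
\end{itemize}

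The missing idea is to rescale by a positive rational number. Let $d$ be the gcd of the $c_k'$. Then $B/d$ is still admissible, since augmentation, signs and support are unchanged. Also $\lambda_S(\Phi(B/d))=\lambda_S(\Phi(B))$. Finally, $\Phi(B/d)$ has content $1$, because $\Phi\colon T\Z[T]\to S\Z[S]$ is a $\Z$-module isomorphism and Lemma~\ref{lem:divis1}(2) identifies contents in $\Z[T]$ and $\Z[\Gamma]$. So $h=MB/d$ works.

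The paper does exactly this, with no congruence step modulo $Q$ at all. It takes the admissible $h\in T\Z[T]$ with $\lambda_S(\Phi(h))=\lambda'$ produced in the proof of Theorem~\ref{thm:main}, and multiplies it by $M$ divided by its content.

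Alternatively, within your own framework, target $-S^{\lambda'}$ instead of $S^{\lambda'}$. Then $c_{\lambda'}=1$ and $c_k=0$ for $k>\lambda'$, so you may keep $c_{\lambda'}'=1$. Now $\Phi(B)$ has leading coefficient $-1$, hence content $1$ at every prime.

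A minor slip: $Z_\alpha=h_\alpha+\iota(h_\alpha)$ involves negative powers of $\gamma$, so it is not in $\Z[T]$, and Lemma~\ref{lem:divis1}(2) does not apply verbatim. You can multiply by a power of $\gamma$ first. Or observe directly that the coefficient of $\gamma^{\pm k}$ in $h+\iota(h)$ is $-c_k$ and that of the identity is $2\sum_k c_k$.
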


\begin{proof}
By applying Theorem \ref{thm:main} to the given $\lambda_p$ (without conditions on $\mu_p$), we obtain an admissible element $h \in \Z[\Gamma]$ such that $\lambda_S(\Phi(h)) = (\lambda_p+1)/2$.
Indeed, in the proof, we constructed such an $h$ in $\Z[T] \subset \Z[\Gamma]$.
Recall that by Proposition \ref{prop:Phi_surj}
\[
\Phi: T \Z[T] \to S \Z[S]
\]
is an isomorphism, so Lemma \ref{lem:divis1} implies that the $\mu_l$-invariants are preserved under the map $\Phi$.
It is now enough to multiply $h$ by the positive rational number to adjust the $\mu_l$-invariants, noting that admissibility is preserved under this process.
\end{proof}

%%%%%%%%%%%%%%%%%%%%%%
\section{Proof of Theorem \ref{thm:main_ram}}\label{sec:ram}
%%%%%%%%%%%%%%%%%%%%%%

In this section, we study the ramified case.

%%%%%%%%%%%%%%%%%%%%%%
\subsection{The analogue of Iwasawa's class number formula}\label{ss:ram_basic}
%%%%%%%%%%%%%%%%%%%%%%

This subsection is a generalization of Subsection \ref{ss:Iw_graph}.
One may refer to Gambheera--Valli\`{e}res \cite{GV24} or the author's article \cite{Kata_31} for details.

%%%%%%%%%%%%%%%%%%%%%%
\subsubsection{Ramified $\Z_p$-coverings} 
%%%%%%%%%%%%%%%%%%%%%%

Let $X$ be a graph, $\Gamma$ a profinite group that is isomorphic to $\Z_p$, and $\alpha: \bE_X \to \Gamma$ a voltage assignment.
We moreover consider a family $\cI = (I_v \subset \Gamma)_{v \in V_X}$ of subgroups of $\Gamma$ indexed by the vertices.
Then we can construct the (ramified) tower of graphs
\[
X = X_0 \leftarrow X_1 \leftarrow X_2 \leftarrow \cdots
\]
as follows.
For each $n$, let $I_{v, n}$ be the image of $I_v$ to $\Gamma_n$.
We define the derived graph $X_n$ as
\[
V_{X_n} = \coprod_{v \in V_X} (\Gamma_n/I_{v, n} \times \{v\}),
\quad
\bE_{X_n} = \Gamma_n \times \bE_X
\]
and
\[
\ol{(\gamma, e)} = (\gamma \cdot \alpha(e), \ol{e}),
\quad
s((\gamma, e)) = (\gamma I_{s(e), n}, s(e)),
\quad
t((\gamma, e)) = (\gamma \cdot \alpha(e) I_{t(e), n}, t(e))
\]
for each $(\gamma, e) \in \bE_{X_n}$.
For $n' \geq n$, the morphism $X_{n'} \to X_n$ in the tower is induced from the projections $\Gamma_{n'}/I_{v, n'} \to \Gamma_n/I_{v, n}$ and $\Gamma_{n'} \to \Gamma_n$.
For each $n$, the group $\Gamma_n$ naturally acts on $X_n$ over $X$.

As long as $X_n$ are all connected, the tower above is a $\Z_p$-covering of connected graphs.
Note that the connectedness is guaranteed if the unramified derived graphs are already connected, or if $X$ is connected and has a totally ramified vertex $v$ (i.e., $I_v = \Gamma$).
Conversely, every $\Z_p$-covering of connected graphs can be constructed in this manner, up to isomorphism.

%%%%%%%%%%%%%%%%%%%%%%
\subsubsection{The formula}
%%%%%%%%%%%%%%%%%%%%%%

We state the analogue of Iwasawa's class number formula with explicit descriptions of the $\lambda$- and $\mu$-invariants.
The result is essentially the same as \cite[Theorems A and B]{GV24}.
We follow the argument in \cite[Proof of Theorem 4.3]{Kata_31}.
Associated to the family $\cI$, we define a subset of $V_X$ by
\[
V_{X, \cI} = \{v \in V_X \mid \text{$I_v$ is trivial}\},
\]
which is denoted by $V_X^0$ in \cite{Kata_31}.
Recall that the Laplacian operator $\cL_{\alpha}: \bigoplus_{v \in V_X} \Z[\Gamma] v \to \bigoplus_{v \in V_X} \Z[\Gamma] v$ is defined in Subsection \ref{ss:Iw_graph}.
Let
\[
\cL_{\alpha, \cI}: \bigoplus_{v \in V_{X, \cI}} \Z[\Gamma] v \to \bigoplus_{v \in V_{X, \cI}} \Z[\Gamma] v
\]
be the component (a submatrix) of $\cL_{\alpha}$, which is denoted by $\cL_{X, \Gamma, \cI}'$ in \cite{Kata_31}.
We define
\[
Z_{\alpha, \cI} = \det(\cL_{\alpha, \cI}) \in \Z[\Gamma].
\]

As a generalization of Theorem \ref{thm:ICNF_ref}, we have the following.

\begin{thm}\label{thm:ICNF_ref_ram}
Let $X$ be a connected graph.
Let $\alpha: \bE_X \to \Gamma$ be a voltage assignment.
Let $\cI = (I_v \subset \Gamma)_{v \in V_X}$ be a family of subgroups of $\Gamma$ indexed by the vertices.
Let $X_{\infty}/X$ be the ramified $\Z_p$-covering constructed by the derived graphs, where we assume that each $X_n$ is connected.
Then
\[
\ord_p(\kappa_{X_n}) = \lambda n + \mu p^n + \nu
\]
holds for $n \gg 0$ with
\[
\lambda = \lambda(Z_{\alpha, \cI}) - 1 + \sum_{v \in V_X \setminus V_{X, \cI}} [\Gamma: I_v],
\]
$\mu = \mu(Z_{\alpha, \cI})$, and some integer $\nu$.
\end{thm}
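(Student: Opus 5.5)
The plan is to compute $\kappa_{X_n}$ by the matrix-tree theorem and to decompose the Laplacian of $X_n$ along the characters of $\Gamma_n$, as in \cite[Proof of Theorem 4.3]{Kata_31}. Let $\mathcal{L}_n$ be the Laplacian of $X_n$, acting on $\bigoplus_{v \in V_X} \Z[\Gamma_n/I_{v,n}]\, v$. Write $N_n = \# V_{X_n}$. Then $N_n \kappa_{X_n}$ equals the product of the nonzero eigenvalues of $\mathcal{L}_n$.

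After extending scalars to $\ol{\mathbb{Q}}_p$, the module splits into $\psi$-parts, one for each character $\psi$ of $\Gamma_n$. The $\psi$-part of $\Z[\Gamma_n/I_{v,n}]$ is one-dimensional if $\psi$ is trivial on $I_{v,n}$ and zero otherwise. So the $\psi$-part of $\mathcal{L}_n$ is a square matrix indexed by $V_\psi = \{v \mid \psi|_{I_{v,n}} = 1\}$. Its entries come from $\psi(\cL_{\alpha})$, except that the rows and columns at a ramified vertex $v$ carry a scaling by $|I_{v,n}|$: a vertex $(\gamma I_{v,n}, v)$ of $X_n$ has $|I_{v,n}|$ times as many incident edges as $v$ has in $X$. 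I would make this scaling explicit first.

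Next I would sort the characters into two classes. Write $I_v = \Gamma^{p^{m_v}}$ for each ramified $v$, where $m_v = 0$ when $v$ is totally ramified.
\begin{itemize}
\item[(i)] Characters of order $> p^{m_v}$ for every ramified $v$. For these $V_\psi = V_{X,\cI}$, and the determinant is exactly $\psi(Z_{\alpha,\cI})$.
\item[(ii)] The finitely many characters of order $\leq p^{\max m_v}$. Their number is independent of $n$.
\end{itemize}
For (i), I would apply the standard lemma behind Theorem \ref{thm:ICNF_ref}: for $f \in \Z_p[[T]]$ with $f \neq 0$, the valuation of $\prod_{\zeta} f(\zeta - 1)$ over $\zeta$ running through $p^n$-th roots of unity outside a fixed finite set equals $\lambda(f) n + \mu(f) p^n + O(1)$. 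This yields the terms $\lambda(Z_{\alpha,\cI})\, n + \mu(Z_{\alpha,\cI})\, p^n$.

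For (ii), the key point is that the determinant is not bounded in $n$. Fix a character $\psi$ of order dividing $p^{m_v}$. Each ramified $v \in V_\psi$ contributes a row scaled by $|I_{v,n}| = p^{n - m_v}$, so that factor contributes $n + O(1)$ to the valuation. Such a vertex $v$ lies in $V_\psi$ for exactly $[\Gamma : I_v] = p^{m_v}$ characters $\psi$. Summing over these characters gives $\sum_{v \in V_X \setminus V_{X,\cI}} [\Gamma : I_v]\, n + O(1)$. The trivial character needs separate care: there one must use the product of nonzero eigenvalues rather than the determinant. Finally, dividing by $N_n$, whose valuation is $n + O(1)$ when $V_{X,\cI} \neq \emptyset$, accounts for the $-1$.

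I expect the main obstacle to be the precise bookkeeping in (ii). After factoring out the powers of $p^{n}$ from the ramified rows, the remaining matrix must have a nonzero determinant that stabilizes in $n$. The same holds for the reduced determinant at the trivial character, and one must also get the valuation of $N_n$ right, including when $V_{X,\cI} = \emptyset$. The first thing I would try is to write the $\psi$-part matrix as $D_n M_\psi$. Here $D_n$ is diagonal with entries $|I_{v,n}|$ at ramified vertices and $1$ elsewhere. One then checks that $M_\psi$ is a Laplacian-type matrix independent of $n$, whose nondegeneracy follows from the connectedness of $X_{m}$ for $m = \max m_v$.
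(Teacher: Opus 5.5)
Your $\psi$-part decomposition $D_nM_\psi$ is correct, and so is your treatment of class (i) and of the nontrivial characters in class (ii). The step that fails is the last one. It is not true that $\ord_p(N_n)=n+O(1)$ whenever $V_{X,\cI}\neq\emptyset$. Write $R=V_X\setminus V_{X,\cI}$. For $n\gg0$,
\[
N_n=\#V_{X,\cI}\,p^n+\sum_{v\in R}[\Gamma:I_v],
\]
so $\ord_p(N_n)$ is \emph{bounded} as soon as some vertex is ramified. For example, with one unramified and one totally ramified vertex, $N_n=p^n+1$.

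There is a matching error at the trivial character. Your count ``each ramified $v$ lies in $V_\psi$ for $[\Gamma:I_v]$ characters'' tacitly assumes that the trivial character contributes $\ord_p\det D_n$. Its true contribution is $n$ smaller. In the mixed case these two errors cancel, which is why your final total looks right. But if you carry out the ``separate care'' at the trivial character correctly and keep your claim about $N_n$, you get $\lambda$ too small by one.

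The fix is a short computation. $D_n$ is diagonal, and every principal cofactor of the ordinary Laplacian $M_1$ of $X$ equals $\kappa_X$. Hence the product of the nonzero eigenvalues of $D_nM_1$, which is the sum of its principal minors of size $\#V_X-1$, equals
\[
\kappa_X\sum_{v\in V_X}\prod_{w\neq v}|I_{w,n}|
=\kappa_X\det(D_n)\sum_{v\in V_X}\frac{[\Gamma_n:I_{v,n}]}{p^n}
=\kappa_X\det(D_n)\,\frac{N_n}{p^n}.
\]
So the factor $N_n$ from the matrix-tree theorem cancels identically, and
\[
p^n\kappa_{X_n}=\kappa_X\det(D_n)\prod_{\psi\neq1}\det(D_nM_\psi),
\]
where $D_n$ is restricted to $V_\psi$ in each factor. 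The factor $p^{-n}$ is the real source of the $-1$. This holds uniformly, including when $V_{X,\cI}=\emptyset$, so the case distinction you were worried about disappears. With this in place, counting each ramified $v$ once for every one of the $[\Gamma:I_v]$ characters trivial on $I_v$, now including the trivial one, is legitimate and gives exactly the stated $\lambda$.

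With this repair your argument is a self-contained, eigenvalue-level proof. The paper argues differently. It cites $\lambda=\lambda(\Pic_{\Z_p}X_\infty)-1$ and $\mu=\mu(\Pic_{\Z_p}X_\infty)$ from \cite[Theorem A]{GV24}, together with the exact sequence
\[
0\to\bigoplus_{v\in R}\Z_p[\Gamma/I_v]\to\Pic_{\Z_p}X_\infty\to\operatorname{coker}\cL_{\alpha,\cI}\to0
\]
from \cite{Kata_31}. It then concludes by additivity of $\lambda$ and $\mu$, so it never has to handle the trivial character or $N_n$ by hand.
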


\begin{proof}
Let $\Pic X_n$ be the Picard group of $X_n$, and define $\Pic_{\Z_p} X_{\infty} = \varprojlim_n (\Z_p \otimes_{\Z} \Pic X_n)$.
Then \cite[Theorem A]{GV24} (see also \cite[Theorem 4.1]{Kata_31}) shows the formula with $\lambda = \lambda(\Pic_{\Z_p} X_{\infty}) - 1$ and $\mu = \mu(\Pic_{\Z_p} X_{\infty})$.
In \cite[Proof of Theorem 4.3]{Kata_31}, we derive an exact sequence
\[
0 \to \bigoplus_{v \in V_X \setminus V_{X, \cI}} \Z_p[\Gamma/I_v] \to \Pic_{\Z_p} X_{\infty} \to \Pic'_{\Z_p} X_{\infty} \to 0,
\]
where we define $\Pic'_{\Z_p} X_{\infty}$ as the cokernel of the endomorphism $\cL_{\alpha, \cI}$ on $\bigoplus_{v \in V_{X, \cI}} \Z_p[[\Gamma]] v$, so we have $\lambda(\Pic'_{\Z_p} X_{\infty}) = \lambda(Z_{\alpha, \cI})$ and $\mu(\Pic'_{\Z_p} X_{\infty}) = \mu(Z_{\alpha, \cI})$.
For each $v \in V_X \setminus V_{X, \cI}$, we also have $\lambda(\Z_p[\Gamma/I_v]) = [\Gamma: I_v]$ and $\mu(\Z_p[\Gamma/I_v]) = 0$.
Now the theorem follows by using the additivity of $\lambda$ and $\mu$ with respect to exact sequences.
\end{proof}

\begin{eg}
Suppose that $I_v$ is non-trivial for every $v \in V_X$, that is, $V_{X, \cI} = \emptyset$.
Then we have $Z_{\alpha, \cI} = 1$ (regardless of $\alpha$), so $\mu = 0$ and $\lambda = -1 + \sum_{v \in V_X} [\Gamma: I_v]$ hold.
In particular, by setting $I_v = \Gamma$ for every $v \in V_X$ and varying $\# V_X$, we can realize any $\lambda \geq 0$ with $\mu = 0$.
\end{eg}

%%%%%%%%%%%%%%%%%%%%%%
\subsubsection{The parity condition}
%%%%%%%%%%%%%%%%%%%%%%

The following is a generalization of Proposition \ref{prop:odd}.

\begin{prop}\label{prop:odd_2}
In Theorem \ref{thm:ICNF_ref_ram}, we have
\[
\lambda(X_{\infty}/X) \equiv - 1 + \sum_{v \in V_X \setminus V_{X, \cI}} [\Gamma: I_v] \mod 2.
\]
\end{prop}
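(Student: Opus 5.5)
By Theorem \ref{thm:ICNF_ref_ram}, we have
\[
\lambda(X_{\infty}/X) = \lambda(Z_{\alpha, \cI}) - 1 + \sum_{v \in V_X \setminus V_{X, \cI}} [\Gamma: I_v].
\]
So the proposition reduces to showing that $\lambda(Z_{\alpha, \cI})$ is even. The plan mirrors the unramified case: show that $Z_{\alpha,\cI}$ is $\iota$-invariant, then apply Proposition \ref{prop:inv_g}, which says that every $\iota$-invariant element of $\Z_p[[\Gamma]]$ has even $\lambda$-invariant. Lemma \ref{lem:inv_even} covers $p=2$ as well.

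The key step is the $\iota$-invariance. By Proposition \ref{prop:iota_inv_Z}, $\iota(\cL_{\alpha})$ is the transpose of $\cL_{\alpha}$. To check this directly, write the $(v,w)$-entry of $\cL_\alpha$ as $\deg(v)\delta_{vw} - \sum_{e:\, s(e)=v,\, t(e)=w} \alpha(e)$. The involution $e \mapsto \ol{e}$ gives a bijection between edges from $v$ to $w$ and edges from $w$ to $v$, and it satisfies $\alpha(\ol e)=\alpha(e)^{-1}$. Hence applying $\iota$ to the $(v,w)$-entry yields the $(w,v)$-entry. Now $\cL_{\alpha,\cI}$ is the principal submatrix of $\cL_\alpha$ indexed by $V_{X,\cI}$, so the same row and column set is used on both sides. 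Since taking a principal submatrix commutes with transposition, $\iota(\cL_{\alpha,\cI})$ equals the transpose of $\cL_{\alpha,\cI}$. Because $\iota$ is a ring homomorphism, $\det$ commutes with applying $\iota$ entrywise, and the determinant is invariant under transpose. Therefore $\iota(Z_{\alpha,\cI}) = Z_{\alpha,\cI}$.

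One degenerate case remains. If $V_{X,\cI}=\emptyset$, then $Z_{\alpha,\cI}=1$ and $\lambda=0$, which is even. We also need $Z_{\alpha,\cI}\neq 0$ so that the invariants are defined. This is implicit in Theorem \ref{thm:ICNF_ref_ram}, which yields finite $\lambda,\mu$, and I would simply cite it.

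There is no serious obstacle. The only point needing care is that restricting to $V_{X,\cI}$ must be a \emph{principal} submatrix, with identical row and column index sets, so that the transpose relation survives. This holds by the definition of $\cL_{\alpha,\cI}$ as the component on $\bigoplus_{v\in V_{X,\cI}}$.
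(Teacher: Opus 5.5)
Your proposal is correct and matches the paper's proof: the transpose relation $\iota(\cL_{\alpha}) = {}^t\cL_{\alpha}$ from Proposition \ref{prop:iota_inv_Z} passes to the principal submatrix $\cL_{\alpha,\cI}$, which gives $\iota(Z_{\alpha,\cI}) = Z_{\alpha,\cI}$, and Proposition \ref{prop:inv_g} then shows that $\lambda(Z_{\alpha,\cI})$ is even. The extra points you add (the entrywise check, the case $V_{X,\cI}=\emptyset$, and $Z_{\alpha,\cI}\neq 0$) are details the paper leaves implicit.
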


\begin{proof}
By Proposition \ref{prop:iota_inv_Z}, $\iota(\cL_{\alpha})$ is equal to the transpose of $\cL_{\alpha}$, 
so the same holds for $\cL_{\alpha, \cI}$ and we obtain $\iota(Z_{\alpha, \cI}) = Z_{\alpha, \cI}$.
By Proposition \ref{prop:inv_g}, it follows that $\lambda(Z_{\alpha, \cI})$ is even.
This completes the proof.
\end{proof}

%%%%%%%%%%%%%%%%%%%%%%
\subsection{Construction}
%%%%%%%%%%%%%%%%%%%%%%

In order to construct any pair $(\lambda, \mu)$ with even $\lambda$, the discussion so far leads us to consider connected graphs $X$ with two vertices and $\Z_p$-coverings $X_{\infty}/X$ in which one vertex is unramified and the other is totally ramified.
Note that the existence of a totally ramified vertex implies that the $X_n$ are all connected.

The following corresponds to Proposition \ref{prop:bou_compu}.

\begin{prop}\label{prop:bou2_compu}
Let $X$ be a connected graph with two vertices, so $V_X = \{v, w\}$.
We define the family $\cI$ by letting $I_v$ be trivial and $I_w = \Gamma$.
Let $\alpha: \bE_X \to \Gamma$ be a voltage assignment, and let $X_{\infty}/X$ be the derived ramified $\Z_p$-covering.
Let
\[
\{e \in \bE_X \mid s(e) = t(e) = v\} = \{e_1, \ol{e_1}, \dots, e_t, \ol{e_t} \}
\]
and we set
\[
h_{\alpha} = \sum_{i=1}^t (1-\alpha(e_i)) \in \Z[\Gamma].
\]
We also set $c = \# \{e \in \bE_X \mid s(e) =v, t(e) = w\}$, which is positive since $X$ is connected.
Then we have
\[
\lambda(X_{\infty}/X) = \lambda(h_{\alpha} + \iota(h_{\alpha}) + c),
\quad
\mu(X_{\infty}/X) = \mu(h_{\alpha} + \iota(h_{\alpha}) + c).
\]
\end{prop}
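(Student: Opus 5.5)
The plan is to apply Theorem \ref{thm:ICNF_ref_ram} directly and compute the $1\times 1$ matrix $\cL_{\alpha,\cI}$. Here $V_{X,\cI} = \{v\}$ and $V_X \setminus V_{X,\cI} = \{w\}$ with $[\Gamma : I_w] = 1$. Theorem \ref{thm:ICNF_ref_ram} then gives
\[
\lambda(X_{\infty}/X) = \lambda(Z_{\alpha,\cI}) - 1 + 1 = \lambda(Z_{\alpha,\cI}),
\qquad
\mu(X_{\infty}/X) = \mu(Z_{\alpha,\cI}).
\]
The hypothesis that every $X_n$ is connected holds automatically, because $w$ is totally ramified and $X$ is connected, as remarked after the construction of ramified coverings. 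So it remains only to show
\[
Z_{\alpha,\cI} = h_{\alpha} + \iota(h_{\alpha}) + c.
\]

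Since $V_{X,\cI} = \{v\}$, the element $Z_{\alpha,\cI}$ is the $(v,v)$-entry of the Laplacian matrix, that is, the coefficient of $v$ in
\[
\cL_{\alpha}(v) = \sum_{s(e) = v} \bigl(v - \alpha(e) t(e)\bigr).
\]
I will split the edges with $s(e) = v$ into two groups.

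\begin{itemize}
\item[(i)] Loops at $v$, namely $e_1, \ol{e_1}, \dots, e_t, \ol{e_t}$. Each contributes $1 - \alpha(e)$ to the $v$-coefficient. Using $\alpha(\ol{e_i}) = \alpha(e_i)^{-1}$, these contributions sum to
\[
\sum_{i=1}^t \bigl(1 - \alpha(e_i)\bigr) + \sum_{i=1}^t \bigl(1 - \alpha(e_i)^{-1}\bigr) = h_{\alpha} + \iota(h_{\alpha}),
\]
exactly as in the proof of Proposition \ref{prop:bou_compu}.
\item[(ii)] Edges from $v$ to $w$. There are $c$ of them, and each contributes $v - \alpha(e)w$. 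The $w$-part does not enter the $(v,v)$-entry, so each such edge contributes exactly $1$, for a total of $c$.
\end{itemize}

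Since there are only two vertices, these are all the edges with source $v$. Summing (i) and (ii) gives the claimed identity, and the proposition follows.

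Finally, $c > 0$ holds because $X$ is connected: some edge must join $v$ and $w$, and its orientation starting at $v$ is counted in $c$. None of these steps is a real obstacle. The only points needing care are that the ramified vertex $w$ contributes exactly $+1$ to $\lambda$, which cancels the $-1$, and that edges to $w$ contribute their constant $1$ to the diagonal entry while their voltages $\alpha(e)$ disappear.
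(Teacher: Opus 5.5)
Your proposal is correct and matches the paper's proof: apply Theorem \ref{thm:ICNF_ref_ram}, note that $-1 + [\Gamma : I_w] = 0$, and identify the $1 \times 1$ determinant $Z_{\alpha, \cI}$ with $h_{\alpha} + \iota(h_{\alpha}) + c$. You also write out the diagonal-entry computation (loops at $v$ give $h_{\alpha} + \iota(h_{\alpha})$, and each edge from $v$ to $w$ contributes $1$), which the paper leaves to the reader.
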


\begin{proof}
This follows directly from Theorem \ref{thm:ICNF_ref_ram}.
We only have to note that 
\[
-1 + [\Gamma: I_w] = -1 + 1 = 0
\]
and $Z_{\alpha, \cI} = h_{\alpha} + \iota(h_{\alpha}) + c$.
\end{proof}

Now we are ready to prove the second main theorem.

\begin{proof}[Proof of Theorem \ref{thm:main_ram}]
We consider the situation in Proposition \ref{prop:bou2_compu}.
By using $\Phi$ in Definition \ref{defn:Phi}, we have
\[
h_{\alpha} + \iota(h_{\alpha}) + c = \Phi(h_{\alpha})(T + \iota(T)) + c,
\]
so we obtain
\[
\lambda(X_{\infty}/X) = 2 \lambda_S(\Phi(h_{\alpha}) + c),
\quad
\mu(X_{\infty}/X) = \mu_S(\Phi(h_{\alpha}) + c)
\]
by Proposition \ref{prop:inv_g}.

Let $\lambda' \geq 0$ and $\mu \geq 0$ be given integers.
We shall construct a $\Z_p$-covering $X_{\infty}/X$ such that $\lambda(X_{\infty}/X) = 2 \lambda'$ and $\mu(X_{\infty}/X) = \mu$.
By the discussion above, these are equivalent to $\lambda_S(\Phi(h_{\alpha}) + c) = \lambda'$ and $\mu_S(\Phi(h_{\alpha}) + c) = \mu$.

Take any polynomial $g(S) \in S\Z[S]$ and $c \in \Z$ satisfying $\lambda_S(g(S) + c) = \lambda'$ and $\mu_S(g(S) + c) = \mu$; for instance, we may take $g(S) = p^{\mu} S^{\lambda'}$ and $c = 0$ if $\lambda' \geq 1$, and $g(S) = 0$ and $c = p^{\mu}$ if $\lambda' = 0$.
Then by Proposition \ref{prop:Phi_surj}, we find a unique polynomial $h \in T \Z[T]$ such that $\Phi(h) = g(S)$.

It remains to modify $h$ and $c$ so that $h$ becomes admissible in the sense of Definition \ref{defn:adm} and $c$ becomes positive (note that condition (c) in admissibility can be ignored, since the connectedness holds without it).
This can be done in the same manner as in the proof of Theorem \ref{thm:main}, by modifying $h$ and $c$ modulo $p^{\mu+1}$.
This completes the proof.
\end{proof}

Let us describe the construction through concrete examples.
\begin{itemize}
\item
Let $\lambda' = 0$ and $\mu \geq 0$.
We take $g(S) = 0$ and $c = p^{\mu}$.
Then $g(S) = \Phi(h)$ with $h = 0$, which already satisfies the admissibility except for (c).
Therefore, we may take $t = 0$ and $c = p^{\mu}$ (see Figure \ref{fig:case_p3l1}).

%p=3, lambda=0, mu=1
\begin{figure}[H]
\centering
%\Large
\[
\xymatrix{
%0-th
\begin{tikzpicture}[baseline]
\node[circle,fill,inner sep=1pt] (a) at (0,0.5) {};
\node[circle,fill,inner sep=1pt] (b) at (0,-0.5) {};
\foreach \bend in {15, 0,-15}
\path (a) edge[draw,bend right=\bend] (b);
\end{tikzpicture}&
%1-st
\begin{tikzpicture}[baseline]
	\node[minimum size=4em,regular polygon,regular polygon sides=3] (a) {};
	\node[circle,fill,inner sep=1pt] (c) at (a.center) {};
	\foreach \x in {1, 2, ..., 3}
	\fill (a.corner \x) circle[radius=1pt];
	\foreach \x in {1, 2, ..., 3}
	\foreach \bend in {-15, 0, 15}
		\path (a.corner \x) edge [bend right=\bend] (c);	
\end{tikzpicture}
\ar[l] &
%2-nd
\begin{tikzpicture}[baseline]
	\node[minimum size=4em,regular polygon,regular polygon sides=9] (a) {};
	\node[circle,fill,inner sep=1pt] (c) at (a.center) {};
	\foreach \x in {1, 2, ..., 9}
	\fill (a.corner \x) circle[radius=1pt];
	\foreach \x in {1, 2, ..., 9}
	\foreach \bend in {-15, 0, 15}
		\path (a.corner \x) edge [bend right=\bend] (c);	
\end{tikzpicture}
\ar[l] & \cdots \ar[l]}
\]
  \caption{($p = 3$) $\lambda = 0$, $\mu = 1$}
    \label{fig:case_p3l1}
\end{figure}

\item
Let $\lambda' \geq 1$ and $\mu \geq 0$.
Then, in fact, we can use the same $h$ as in Subsection \ref{ss:constr} for the same $\lambda'$ and $\mu$, and we only have to take $c = p^{\mu+1}$ (see Figures \ref{fig:case_p3l12'}, \ref{fig:case_p3l3'}, and \ref{fig:case_p3l5'}, which are respectively based on Figures \ref{fig:case_p3l12}, \ref{fig:case_p3l3}, and \ref{fig:case_p3l5}).

%p=3, lambda = 2, mu = 0
\begin{figure}[H]
  \centering
%\Large
\[
\xymatrix{
%0-th
\begin{tikzpicture}[baseline]
\node[circle,fill,inner sep=1pt] (a) at (0,0.5) {};
\node[circle,fill,inner sep=1pt] (b) at (0,-0.5) {};
\foreach \bend in {15, 0,-15}
	\path (a) edge[draw,bend right=\bend] (b);
\path (a) edge[loop above, looseness=30, out=60, in=120] node[above] {\normalsize $\gamma$} (a);
\end{tikzpicture}
&
%1-st
\begin{tikzpicture}[baseline]
	\node[minimum size=4em,regular polygon,regular polygon sides=3] (a) {};
	\foreach \x in {1, 2, ..., 3}
	\fill (a.corner \x) circle[radius=1pt];
	\foreach \x/\y in {1/2, 2/3, 3/1}
	\foreach \bend in {0}
		\path (a.corner \x) edge [bend right=\bend] (a.corner \y);
	\node[circle,fill,inner sep=1pt] (c) at (a.center) {};
	\foreach \x in {1, 2, ..., 3}
	\foreach \bend in {-15, 0, 15}
		\path (a.corner \x) edge [bend right=\bend] (c);	
\end{tikzpicture}
\ar[l] &
%2-nd
\begin{tikzpicture}[baseline]
	\node[minimum size=4em,regular polygon,regular polygon sides=9] (a) {};
	\foreach \x in {1, 2, ..., 9}
	\fill (a.corner \x) circle[radius=1pt];
	\foreach \x/\y in {1/2, 2/3, 3/4, 4/5, 5/6, 6/7, 7/8, 8/9, 9/1}
	\foreach \bend in {0}
		\path (a.corner \x) edge [bend right=\bend]  (a.corner \y);
	\node[circle,fill,inner sep=1pt] (c) at (a.center) {};
	\foreach \x in {1, 2, ..., 9}
	\foreach \bend in {-15, 0, 15}
		\path (a.corner \x) edge [bend right=\bend] (c);	
\end{tikzpicture}
\ar[l] & \cdots \ar[l]}
\]
  \caption{($p = 3$) $\lambda = 2$, $\mu = 0$}
    \label{fig:case_p3l12'}
\end{figure}

%p=3, lambda=4, mu=0
\begin{figure}[H]
  \centering
%\Large
\[
\xymatrix{
%0-th
\begin{tikzpicture}[baseline]
\node[circle,fill,inner sep=1pt] (a) at (0,0.5) {};
\node[circle,fill,inner sep=1pt] (b) at (0,-0.5) {};
\foreach \bend in {15, 0,-15}
	\path (a) edge[draw,bend right=\bend] (b);
\foreach \len in {15,30}
\path (a) edge[loop above, looseness=\len, out=60, in=120] (a);
\path (a) edge[loop above, looseness=45, out=60, in=120] node[above] {\normalsize $\gamma^2, \gamma^2, \gamma$} (a);
\end{tikzpicture}
&
%1-st
\begin{tikzpicture}[baseline]
	\node[minimum size=4em,regular polygon,regular polygon sides=3] (a) {};
	\foreach \x in {1, 2, ..., 3}
	\fill (a.corner \x) circle[radius=1pt];
	\foreach \x/\y in {1/2, 2/3, 3/1}
	\foreach \bend in {15, 0, -15}
		\path (a.corner \x) edge [bend right=\bend] (a.corner \y);
	\node[circle,fill,inner sep=1pt] (c) at (a.center) {};
	\foreach \x in {1, 2, ..., 3}
	\foreach \bend in {-15, 0, 15}
		\path (a.corner \x) edge [bend right=\bend] (c);	
\end{tikzpicture}
\ar[l] &
%2-nd
\begin{tikzpicture}[baseline]
	\node[minimum size=4em,regular polygon,regular polygon sides=9] (a) {};
	\foreach \x in {1, 2, ..., 9}
	\fill (a.corner \x) circle[radius=1pt];
	\foreach \x/\y in {1/2, 2/3, 3/4, 4/5, 5/6, 6/7, 7/8, 8/9, 9/1}
	\foreach \bend in {0}
		\path (a.corner \x) edge [bend right=\bend] (a.corner \y);
	\foreach \x/\y in {1/3, 2/4, 3/5, 4/6, 5/7, 6/8, 7/9, 8/1, 9/2}
	\foreach \bend in {10, -10}
		\path (a.corner \x) edge [bend right=\bend] (a.corner \y);
	\node[circle,fill,inner sep=1pt] (c) at (a.center) {};
	\foreach \x in {1, 2, ..., 9}
	\foreach \bend in {-15, 0, 15}
		\path (a.corner \x) edge [bend right=\bend] (c);	
\end{tikzpicture}
\ar[l] & \cdots \ar[l]}
\]
  \caption{($p = 3$) $\lambda = 4$, $\mu = 0$}
    \label{fig:case_p3l3'}
\end{figure}

%p=3, lambda = 6, mu=0
\begin{figure}[H]
  \centering
%\Large
\[
\xymatrix{
%0-th
\begin{tikzpicture}[baseline]
\node[circle,fill,inner sep=1pt] (a) at (0,0.5) {};
\node[circle,fill,inner sep=1pt] (b) at (0,-0.5) {};
\foreach \bend in {15, 0,-15}
	\path (a) edge[draw,bend right=\bend] (b);
\foreach \len in {15,30,45}
\path (a) edge[loop above, looseness=\len, out=60, in=120] (a);
\path (a) edge[loop above, looseness=60, out=60, in=120] node[above] {\normalsize $\gamma^3, \gamma, \gamma, \gamma$} (a);
\end{tikzpicture}
&
%1-st
\begin{tikzpicture}[baseline]
	\node[minimum size=4em,regular polygon,regular polygon sides=3] (a) {};
	\node[circle,fill,inner sep=1pt] (a1) at (a.corner 1) {};
	\node[circle,fill,inner sep=1pt] (a2) at (a.corner 2) {};
	\node[circle,fill,inner sep=1pt] (a3) at (a.corner 3) {};
\path (a1) edge[loop, looseness=30, out=60, in=120] (a1);
\path (a2) edge[loop, looseness=30, out=180, in=240] (a2);
\path (a3) edge[loop, looseness=30, out=300, in=0] (a3);
\path (0,0) edge[loop, looseness=10, out=45, in=135] (0,0);
	\foreach \x in {1, 2, ..., 3}
	\fill (a.corner \x) circle[radius=1pt];
	\foreach \x/\y in {1/2, 2/3, 3/1}
	\foreach \bend in {15, 0, -15}
		\path (a.corner \x) edge [bend right=\bend] (a.corner \y);
	\node[circle,fill,inner sep=1pt] (c) at (a.center) {};
	\foreach \x in {1, 2, ..., 3}
	\foreach \bend in {-15, 0, 15}
		\path (a.corner \x) edge [bend right=\bend] (c);	
\end{tikzpicture}
\ar[l] &
%2-nd
\begin{tikzpicture}[baseline]
	\node[minimum size=4em,regular polygon,regular polygon sides=9] (a) {};
	\foreach \x in {1, 2, ..., 9}
	\fill (a.corner \x) circle[radius=1pt];
	\foreach \x/\y in {1/2, 2/3, 3/4, 4/5, 5/6, 6/7, 7/8, 8/9, 9/1}
	\foreach \bend in {15,0,-15}
		\path (a.corner \x) edge [bend right=\bend] (a.corner \y);
	\foreach \x/\y in {1/4, 2/5, 3/6, 4/7, 5/8, 6/9, 7/1, 8/2, 9/3}
	\foreach \bend in {0}
		\path (a.corner \x) edge [bend right=\bend] (a.corner \y);
	\node[circle,fill,inner sep=1pt] (c) at (a.center) {};
	\foreach \x in {1, 2, ..., 9}
	\foreach \bend in {-15, 0, 15}
		\path (a.corner \x) edge [bend right=\bend] (c);	
\end{tikzpicture}
\ar[l] & \cdots \ar[l]}
\]
  \caption{($p = 3$) $\lambda = 6$, $\mu = 0$}
    \label{fig:case_p3l5'}
\end{figure}
\end{itemize}

Table \ref{tab:ram} lists the values of $\ord_p(\kappa_{X_n})$ for these examples, computed using SageMath.

\begin{table}[htbp]
\centering
\begin{tabular}{c @{\hspace{2em}} cccccccc}
\toprule
$n$ & \swid{$0$} & \swid{$1$} & \swid{$2$} & \swid{$3$} & \swid{$4$} & \swid{$5$} & \swid{$6$} & $\cdots$ \\
\midrule
Figure \ref{fig:case_p3l1} & $1$ & $3$ & $9$ & $27$ & $81$ & $243$ & $729$ & $\cdots$ \\
Figure \ref{fig:case_p3l12'} & $1$ & $3$ & $5$ & $7$ & $9$ & $11$ & $13$ & $\cdots$ \\
Figure \ref{fig:case_p3l3'} & $1$ & $3$ & $7$ & $11$ & $15$ & $19$ & $23$ & $\cdots$ \\
Figure \ref{fig:case_p3l5'} & $1$ & $3$ & $9$ & $15$ & $21$ & $27$ & $33$ & $\cdots$ \\
\bottomrule
\end{tabular}
\caption{The values of $\ord_p(\kappa_{X_n})$}
\label{tab:ram}
\end{table}

\section*{Acknowledgments}

I am very grateful to Daniel Valli\`{e}res for valuable comments on an earlier version of this paper.
This work is supported by JSPS KAKENHI Grant Number 22K13898.

{
\bibliographystyle{abbrv}
\bibliography{biblio}

\begin{thebibliography}{10}

\bibitem{AMT25}
T.~Adachi, K.~Mizuno, and S.~Tateno.
\newblock Iwasawa theory for weighted graphs.
\newblock {\em Ann. Math. Qu\'{e}.}, 2025.

\bibitem{DLRV24}
C.~Dion, A.~Lei, A.~Ray, and D.~Valli\`eres.
\newblock On the distribution of {I}wasawa invariants associated to
  multigraphs.
\newblock {\em Nagoya Math. J.}, 253:48--90, 2024.

\bibitem{FOO06}
S.~Fujii, Y.~Ohgi, and M.~Ozaki.
\newblock Construction of {$\Bbb Z_p$}-extensions with prescribed {I}wasawa
  {$\lambda$}-invariants.
\newblock {\em J. Number Theory}, 118(2):200--207, 2006.

\bibitem{GV24}
R.~Gambheera and D.~Valli\`eres.
\newblock Iwasawa theory for branched {$\Bbb{Z}_p$}-towers of finite graphs.
\newblock {\em Doc. Math.}, 29(6):1435--1468, 2024.

\bibitem{Gon22}
S.~R. Gonet.
\newblock Iwasawa {T}heory of {J}acobians of graphs.
\newblock {\em Algebr. Comb.}, 5(5):827--848, 2022.

\bibitem{Iwa59}
K.~Iwasawa.
\newblock On {$\Gamma $}-extensions of algebraic number fields.
\newblock {\em Bull. Amer. Math. Soc.}, 65:183--226, 1959.

\bibitem{Kata_21}
T.~Kataoka.
\newblock Fitting ideals of {J}acobian groups of graphs.
\newblock {\em Algebr. Comb.}, 7(3):597--625, 2024.

\bibitem{Kata_31}
T.~Kataoka.
\newblock Kida's formula for graphs with ramifications.
\newblock {\em Acta Arith.}, 221(3):271--291, 2025.

\bibitem{LV23}
A.~Lei and D.~Valli\`eres.
\newblock The non-{$\ell$}-part of the number of spanning trees in abelian
  {$\ell$}-towers of multigraphs.
\newblock {\em Res. Number Theory}, 9(1):Paper No. 18, 16, 2023.

\bibitem{MV23}
K.~McGown and D.~Valli\`eres.
\newblock On abelian {$\ell$}-towers of multigraphs {II}.
\newblock {\em Ann. Math. Qu\'e.}, 47(2):461--473, 2023.

\bibitem{MV24}
K.~McGown and D.~Valli\`eres.
\newblock On abelian {$\ell$}-towers of multigraphs {III}.
\newblock {\em Ann. Math. Qu\'{e}.}, 48(1):1--19, 2024.

\bibitem{Oza04}
M.~Ozaki.
\newblock Construction of {$\bold Z_p$}-extensions with prescribed {I}wasawa
  modules.
\newblock {\em J. Math. Soc. Japan}, 56(3):787--801, 2004.

\bibitem{Val21}
D.~Valli\`eres.
\newblock On abelian {$\ell$}-towers of multigraphs.
\newblock {\em Ann. Math. Qu\'e.}, 45(2):433--452, 2021.

\end{thebibliography}
}

\end{document}